\documentclass[12pt]{amsart}

\textheight22.0cm
\textwidth16.5cm
\oddsidemargin0.3cm
\evensidemargin0.3cm
\setlength{\topmargin}{-1cm}

\usepackage{amsmath}
\usepackage{amsfonts}
\usepackage{amssymb,enumerate}
\usepackage{amsthm}
\usepackage{fancyhdr}
\usepackage{tikz}
\usetikzlibrary{arrows,matrix}
\usepackage{hyperref}

\newtheorem{lem}{Lemma}
\newtheorem{cor}[lem]{Corollary}
\newtheorem{prop}[lem]{Proposition}
\newtheorem{thm}[lem]{Theorem}
\newtheorem{que}[lem]{Question}
\theoremstyle{definition}

\newtheorem{defins}[lem]{Definitions}
\newtheorem{rem}[lem]{Remark}
\newtheorem{ex}[lem]{Example}

\newcommand{\ap}{\mathrm {Ap}}
\newcommand{\ord}{\mathrm {ord}}

\numberwithin{lem}{section}

\title{Classes of complete intersection numerical semigroups}

\author{Marco D'Anna, Vincenzo Micale \and Alessio Sammartano}

\subjclass[2000]{14M10; 20M14.}

\keywords{Numerical semigroup; complete intersection; rectangular Ap\'ery set; telescopic semigroup; free semigroup; semigroup of a plane branch; gluing.}

\address[M. D'Anna]{Universit\`a di Catania, Dipartimento di Matematica e Informatica, Viale A. Doria, 6, 95125 Catania, Italy}

\email{mdanna@dmi.unict.it}

\address[V. Micale]{Universit\`a di Catania, Dipartimento di Matematica e Informatica, Viale A. Doria, 6, 95125 Catania, Italy}

\email{vmicale@dmi.unict.it}

\address[A. Sammartano]{Department of Mathematics, Purdue University
150 N. University Street, West Lafayette, IN 47907, USA}

\email{asammart@math.purdue.edu}

\bibliographystyle{amsplain}

\begin{document}

\begin{abstract}
We consider several classes of complete intersection numerical
semigroups, arising from many different contexts
like algebraic geometry, commutative algebra, coding theory
and factorization theory.
In particular, we determine
all the logical implications among these classes and provide  examples.
Most of these classes are shown to be well-behaved with respect to the operation of gluing.
\end{abstract}

\maketitle

\section*{Introduction}

The concept of complete intersection is one of the most prominent
in algebraic geometry. The notion of complete intersection for
numerical semigroups (i.e. submonoids of $(\mathbb{N},+)$)
was introduced by Herzog  in \cite{He}, where
he  proved the celebrated theorem stating that a three-generated
semigroup is a complete intersection if and only if it is
symmetric. Complete intersection  semigroups have been studied
extensively since then  (see  e.g. \cite{AG},
\cite{BGRV}, \cite{BGS}, \cite{De},  \cite{RG2}, \cite{Wa}).

Several subclasses of the complete intersections have been
investigated, with different motivations  arising from   algebra
and  geometry. The study of the value-semigroup of  plane
algebroid branches was initiated by Ap\'ery in his famous paper
\cite{A} and then continued by several other authors (e.g.
\cite{BDF}, \cite{Bre}, \cite{Za}). Bertin and Carbonne  defined
free numerical semigroups in \cite{BC} in order to generalize a
formula for the conductor of the local ring of a plane branch in
terms of its Puiseux expansion. Telescopic semigroups were
introduced in \cite{KP} for their applications to codes, but they
have also been studied in connection with homology (cf. \cite{MO})
and factorization theory (cf. \cite{RGG}). Numerical semigroups
with $\beta$-rectangular and $\gamma$-rectangular Ap\'ery set were
defined in \cite{DMS2} to characterize semigroup rings whose
tangent cone is a complete intersection. Finally, semigroups having
a unique Betti element were characterized in \cite{GOR}.

The main purpose of this paper is to understand better the classes
mentioned above and the relations among them. We also introduce a
new class which is naturally related to the previous ones,
semigroups with $\alpha$-rectangular Ap\'ery set. Our main result
is Theorem \ref{main} in which we show that the implications in
Figure \ref{diagram} hold and provide counterexamples for the
``missing arrows''. Some of these implications are somewhat
surprising: despite the fact that the definitions of free and
telescopic semigroups are very similar, two classes of semigroups
with rectangular Ap\'ery sets sit between them. In Section 2 we
study the operation of gluing, which allows to produce new
complete intersection  semigroups from old ones. We show
that semigroups with $\alpha$-rectangular Ap\'ery sets  are also,
in some sense, well-behaved with respect to gluing. We conclude
with some applications to known results in literature.

Computations   were performed by using  GAP
 (cf. \cite{NS},\cite{GAP}).
The tests for the properties treated in this paper will be included in the next release of the
package \texttt{NumericalSgps}.

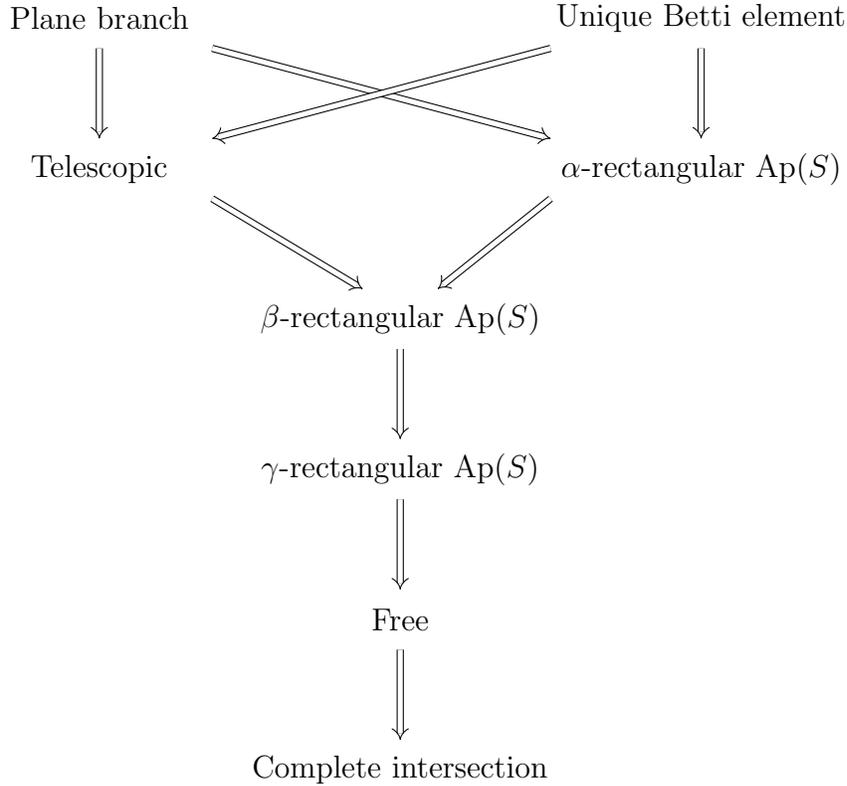
\begin{figure}
\begin{tikzpicture}

\draw  (0,7.6) edge[double,double equal sign distance,-implies] (0,6.4);
\draw [double,double equal sign distance,-implies] (8,7.6) -- (8,6.4);
\draw [double,double equal sign distance,-implies] (1.5,7.6) -- (6,6.4);
\draw [double,double equal sign distance,-implies] (6,7.6) -- (1.5,6.4);
\draw [double,double equal sign distance,-implies] (6,5.6) -- (4.5,4.4);
\draw [double,double equal sign distance,-implies] (1.5,5.6) -- (3.5,4.4);
\draw [double,double equal sign distance,-implies]  (4,3.6) -- (4,2.4);
\draw [double,double equal sign distance,-implies] (4,1.6) -- (4,0.4);
\draw [double,double equal sign distance,-implies] (4,-0.4) -- (4,-1.6);

\node  at (0,8) {Plane branch};
\node  at (8,8) {Unique Betti element};
\node  at (8,6) {$\alpha$-rectangular $\ap(S)$};
\node  at (0,6) {Telescopic};
\node  at (4,4) {$\beta$-rectangular $\ap(S)$};
\node  at (4,2) {$\gamma$-rectangular $\ap(S)$};
\node  at (4,0) {Free};
\node  at (4,-2) {Complete intersection};
\end{tikzpicture}
\caption{Logical implications in Theorem \ref{main}.}
\label{diagram}
\end{figure}

\section{The Classes}

We start by giving some preliminaries on numerical semigroups. Let
$\mathbb{N}$ denote the set of non-negative integers. A {\bf
numerical semigroup} is a subset $ S \subseteq \mathbb{N}$ that is
closed under addition, contains $0$ and has finite complement in
$\mathbb{N}$. The largest integer in $\mathbb{Z} \setminus S$ is
called {\bf Frobenius number} of $S$ and is denoted by $f=f(S)$,
whereas the smallest positive integer in $S$ is known as {\bf
multiplicity} of $S$ and is denoted by $m=m(S)$.

We define a partial order on $S$ setting $ s \preceq t $ if there
is an element $u \in S$ such that $t=u+s$. The set of minimal
elements in the poset $(S \setminus\{0\}, \preceq )$ is called
{\bf minimal system of generators} of $S$.
We define the {\bf embedding dimension} of $S$ as
the cardinality of its minimal system of generators and  denote
it by $\nu=\nu(S)$; it is easy to see that $\nu(S)\leq m(S)$. A
numerical semigroup minimally generated by $\{g_1, \ldots,
g_\nu\}$ will be denoted by $\langle g_1,\ldots,g_\nu \rangle$.
The condition $|\mathbb{N} \setminus S |< \infty $ is equivalent
to $ \gcd (g_1, \ldots, g_\nu)=1$.

For any $n \in S$ we define the {\bf Ap\'ery set of $S$ with
respect to $n$} as $\ap(S,n) =\{s\in S\, |\, s-n \notin S\}$, or
equivalently $\ap(S,n)=\{ \omega_0 , \ldots, \omega_{n-1}\}$ where
$\omega_i = \min \{s \in S : s \equiv i \pmod{n}\}$. The smallest
element in $\ap(S,n)$ is $0$, while the largest one is $f(S) +n$.
If $n=m(S)$ is the multiplicity we just write $\ap(S)$ in place of
$\ap(S,n)$, and we will refer to it simply as the Ap\'ery set of
$S$.

Two types of semigroups are among the most studied, mainly for
their relevance in algebraic geometry. A semigroup $S$ is called
{\bf symmetric} if, for any $x \in \mathbb{Z}$, we have $x \in S
\Leftrightarrow f(S) - x \notin S$; this condition is equivalent
to the fact that $f(S)+m(S)$ is the unique maximal element of the
poset $(\ap(S), \preceq)$. A semigroup $S$ is called a {\bf
complete intersection} if the semigroup ring $\Bbbk[[t^S]]$ is
complete intersection, or equivalently if the
 cardinality of any of its minimal presentations equals $\nu(S)-1$ (cf. \cite{RG}, page 129).

Numerical semigroups other than $ \mathbb{N}$ are never unique
factorization monoids, as there are always elements with different
decompositions into irreducibles (note that in our context an
irreducible element is the same thing as a minimal generator). If
$ s= \lambda_1 g_1 + \cdots + \lambda_\nu g_\nu$ with $ \lambda_i
\in \mathbb{N}$ we say that $ \lambda_1 g_1 + \cdots + \lambda_\nu
g_\nu$ is a {\bf representation} of $ s $. Given $ s \in S $, we
define the {\bf $ M$-adic order} as $ \ord(s) = \max
\{\sum_{i=1}^{ \nu } \lambda_i \, |  \sum_{i=1}^{ \nu } \lambda_i
g_i \text{ is a representation of }s\}$. We say that $ s =
\lambda_1 g_1 + \cdots + \lambda_\nu g_\nu$ is a {\bf maximal
representation} of $ s $ if $ \sum_{i=1}^{\nu } \lambda_i = \ord
(s) $. We can define an other partial order on $ S $ setting $ s
\preceq_M t $ if there exists $ u \in S $ such that $ s + u = t $
and $ \ord(s) + \ord(u) = \ord(t) $ (cf. \cite{Br}). The number of
representations and of maximal representations of elements in a
semigroup is related to some of the objects of our study; see
\cite{BCKR} for more on factorization in numerical semigroups.

The book  \cite{RG} is an exhaustive source on the subject of
numerical semigroups.
\\

 We { now} give the main definitions of the paper.

\begin{defins}\label{definIntegers}
Let $S$ be a numerical semigroup minimally generated by $g_1 <
\cdots < g_{\nu}$. For each $i=2, \ldots, \nu$ define:
\begin{eqnarray*}
\tau_i &=& \tau_i(S)=\min\{ h \in \mathbb{N} \, | \, hg_i \in \langle g_1, \ldots, g_{i-1} \rangle \}-1; \\
\alpha_i &=& \alpha_i(S)= \max\{ h \in \mathbb{N} \, | \, hg_i \in \ap(S)\}; \\
\beta_i &=& \beta_i(S)=\max\{ h \in \mathbb{N} \, | \, hg_i \in \ap(S) \text{ and } \ord(h g_i)=h\};\\
\gamma_i &=& \gamma_i(S)=\max\{ h \in \mathbb{N} \, | \, hg_i \in
\ap(S), \, \ord(hg_i)=h \text{ and} \\
& &  \hspace*{3.8cm} hg_i \text{ has a unique maximal
representation}\}.
\end{eqnarray*}

If $\mathbf{n}=\{ n_1, \ldots, n_\nu\}$ is any rearrangement of the minimal generators (i.e.,
the minimal system of generators not necessarily in increasing order),
define for each $i=2, \ldots, \nu$:\\

$
\phi_i \hspace*{0.2cm} = \hspace*{0.2cm}\phi_i(S,\mathbf{n})=
\min\{ h \in \mathbb{N} \, | \, hn_i \in \langle n_1, \ldots, n_{i-1} \rangle \}-1.$
\end{defins}

\begin{rem}\label{incl}
For each index $i=2, \ldots, \nu,$ we clearly have $\gamma_i \leq
\beta_i \leq \alpha_i$. This, together with the fact that $\ap(S)
\subseteq \Big\{\sum_{i=2}^{\nu} \lambda_i g_i \, | \, 0 \leq
\lambda_i \leq \gamma_i \Big\}$ (cf. \cite[Corollary 2.7]{DMS2}),
implies that
$$
\ap(S) \subseteq \Big\{\sum_{i=2}^{\nu} \lambda_i g_i \, | \, 0 \leq \lambda_i \leq \gamma_i \Big\}
\subseteq  \Big\{\sum_{i=2}^{\nu} \lambda_i g_i \, | \, 0 \leq \lambda_i \leq \beta_i \Big\}
\subseteq \Big\{\sum_{i=2}^{\nu} \lambda_i g_i \, | \, 0 \leq \lambda_i \leq \alpha_i \Big\}.
$$
In particular, we have
$ m=|\ap(S)|\leq \prod_{i=2}^{\nu}(\gamma_i+1)
\leq \prod_{i=2}^{\nu} (\beta_i+1) \leq \prod_{i=2}^{\nu} (\alpha_i+1).$
\end{rem}

\begin{defins}\label{defclasses}
Let $S$ be a numerical semigroup
minimally generated by $g_1 <
\cdots < g_{\nu}$.
\begin{enumerate}
\item $S$ is {\bf telescopic} if
$\ap(S)=\Big\{\sum_{i=2}^{\nu} \lambda_i g_i \, | \, 0 \leq
\lambda_i \leq \tau_i \Big\}$;

\item $S$ is {\bf associated to a plane branch}
if $S$ is telescopic and $(\tau_i+1) g_i < g_{i+1}$ for all $i=2,\ldots,\nu-1$;

\item $S$ has {\bf $\alpha$-rectangular Ap\'ery set} if
$\ap(S)= \Big\{\sum_{i=2}^{\nu} \lambda_i g_i \, | \, 0 \leq \lambda_i \leq \alpha_i \Big\}$;

\item $S$ has {\bf $\beta$-rectangular Ap\'ery set} if
$\ap(S)= \Big\{\sum_{i=2}^{\nu} \lambda_i g_i \, | \, 0 \leq \lambda_i \leq \beta_i \Big\}$;

\item $S$ has {\bf $\gamma$-rectangular Ap\'ery set} if
$\ap(S)= \Big\{\sum_{i=2}^{\nu} \lambda_i g_i \, | \, 0 \leq \lambda_i \leq \gamma_i \Big\}$;

\item $S$ is {\bf free} if there exists a rearrangement $\mathbf{n}=\{ n_1, \ldots, n_\nu\}$
of the minimal generators such that
$\ap(S,n_1)=\Big\{\sum_{i=2}^{\nu} \lambda_i n_i \, | \, 0 \leq \lambda_i \leq \phi_i \Big\}$.

\end{enumerate}
\end{defins}

{Notice that the definitions of telescopic and free semigroups
are not standard, but it is proved in \cite{RG} that the
conditions we state are equivalent to the classical definitions.}

We turn now to the study of semigroups with $\alpha$-rectangular
Ap\'ery set providing some characterizations, then we collect analogous statements for classes (1), (4), (5) and (6).
In \cite{Ro} Rosales introduced the following definition: a
numerical semigroup $S$ has {\bf  Ap\'ery set of unique
expression} if every element in $\ap(S)$ has a unique
representation. We will see that this condition is closely related
to having $\alpha$-rectangular Ap\'ery set.

\begin{lem}[\cite{FGH}, Lemma 6]\label{easylemma}
If $s \preceq t$ and $t\in \ap(S)$, then $s\in \ap(S)$.
\end{lem}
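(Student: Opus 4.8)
The plan is to argue directly from the definitions, using nothing more than the fact that $S$ is closed under addition. First I would unpack the two hypotheses. By definition of the order $\preceq$, the relation $s \preceq t$ means there exists some $u \in S$ with $t = s + u$. By definition of the Ap\'ery set with respect to the multiplicity $m=m(S)$, the hypothesis $t \in \ap(S)$ means precisely that $t - m \notin S$. The conclusion to be reached, $s \in \ap(S)$, is the assertion that $s - m \notin S$.

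The key step is a single contrapositive computation. Suppose, for contradiction, that $s \notin \ap(S)$, i.e. $s - m \in S$. Then I would rewrite $t - m = (s + u) - m = (s - m) + u$. Since both $s - m$ and $u$ lie in $S$, closure of $S$ under addition yields $t - m \in S$. This contradicts the hypothesis $t \in \ap(S)$, and hence $s - m \notin S$, which is exactly $s \in \ap(S)$.

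There is no genuine obstacle here: the statement is a one-line deduction from the defining property of the Ap\'ery set, and the only point requiring mild attention is that $\ap(S)$ is taken with respect to the multiplicity, so the subtracted element $m$ is a fixed element of $S$, which is precisely what drives the closure argument. In fact the same reasoning shows the stronger statement that the conclusion holds verbatim with $\ap(S,n)$ in place of $\ap(S)$ for any $n \in S$, since $n$ plays no role in the argument beyond belonging to $S$.
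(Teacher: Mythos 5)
Your proof is correct: the paper only cites this lemma from [FGH, Lemma 6] without reproducing a proof, and your contrapositive argument --- writing $t-m=(s-m)+u$ and invoking closure of $S$ under addition --- is exactly the standard one-line deduction. Your closing observation that the argument works verbatim for $\ap(S,n)$ with any $n\in S$ is also accurate.
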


\begin{lem}\label{4,5}
If $s \preceq t$ and $t$ has a unique representation, then $s
\preceq_M  t$ and $s$ has a unique representation.
\end{lem}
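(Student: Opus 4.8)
The plan is to leverage the uniqueness of the representation of $t$ in order to control the representations of both $s$ and the complementary summand. Since $s \preceq t$, by definition there is some $u \in S$ with $t = s + u$; note that symmetrically $u \preceq t$ as well (with witness $s$). The single observation driving everything is that if $s = \sum_{i=1}^{\nu} a_i g_i$ is any representation of $s$ and $u = \sum_{i=1}^{\nu} b_i g_i$ is any representation of $u$, then $t = \sum_{i=1}^{\nu} (a_i + b_i) g_i$ is a representation of $t$. As $t$ admits a unique representation, this termwise sum must coincide, coefficient by coefficient, with the one and only representation of $t$.

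To establish that $s$ has a unique representation, I would argue by contradiction: suppose $s$ had two distinct representations, with coefficient vectors $(a_i) \neq (a_i')$. Fixing any representation $(b_i)$ of $u$ and adding, the vectors $(a_i + b_i)$ and $(a_i' + b_i)$ remain distinct and both represent $t$, contradicting its uniqueness. Hence $s$ has a unique representation, say $s = \sum_{i=1}^{\nu} a_i g_i$; the identical argument applied to $u \preceq t$ shows that $u$ too has a unique representation $u = \sum_{i=1}^{\nu} b_i g_i$.

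It then remains to verify the order additivity $\ord(s) + \ord(u) = \ord(t)$, which is exactly what the relation $s \preceq_M t$ requires. Since $s$ and $u$ each have a unique representation, the definition of the $M$-adic order forces $\ord(s) = \sum_{i=1}^{\nu} a_i$ and $\ord(u) = \sum_{i=1}^{\nu} b_i$. The unique representation of $t$ is the termwise sum $(a_i + b_i)$ identified above, so $\ord(t) = \sum_{i=1}^{\nu} (a_i + b_i) = \sum_{i=1}^{\nu} a_i + \sum_{i=1}^{\nu} b_i = \ord(s) + \ord(u)$. This yields $s \preceq_M t$ and completes the argument.

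The proof is short and entirely formal, so there is no genuine obstacle. The only point demanding a little care is the passage between ``unique representation'' and the value of $\ord$: one must observe that when an element has a single representation, the maximum defining $\ord$ is taken over a one-element set and hence equals that representation's coefficient sum. Everything else follows from the elementary fact that representations of $s$ and $u$ concatenate additively into representations of $t$.
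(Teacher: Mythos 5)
Your proof is correct and follows essentially the same route as the paper's: both arguments observe that representations of $s$ and $u$ add termwise to a representation of $t$, deduce uniqueness of the representations of $s$ and $u$ from uniqueness for $t$, and then read off $\ord(s)+\ord(u)=\ord(t)$ from the coefficient sums. The only cosmetic difference is that you phrase the uniqueness step as an explicit contradiction, while the paper states it in one line.
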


\begin{proof}
If an element has a unique representation
then this must be maximal and
the sum of the coefficients equals the order of the element.
Let $t=\sum_{i=1}^{\nu} \lambda_i g_i$ and $s+u= t$ for some $u \in S$.
Since the representation of $t$ is unique, it follows that $s=\sum_{i=1}^{\nu} \xi_i g_i$ and
$u=\sum_{i=1}^{\nu} \rho_i g_i$, with $\rho_i + \xi_i = \lambda_i$ for each $i$.
These representations must be unique,
otherwise $t$ has a double representation,
 and we get $\ord(s)+\ord(u)=\sum_{i=1}^{\nu} \xi_i +\sum_{i=1}^{\nu} \rho_i= \sum_{i=1}^{\nu} \lambda_i= \ord(t)$.
\end{proof}

\begin{prop}\label{charalpha}
The following conditions are equivalent:

\begin{enumerate}
     \item[(i)]
      $\ap(S)$ is $\alpha$-rectangular;
      \item[(ii)]
      there is only one maximal element in $(\ap(S), \preceq)$ and it has a unique representation;
       \item[(iii)] $S$ is symmetric and  $\ap(S)$ is of unique expression;
       \item[(iv)] $f+m=\sum_{i=2}^{\nu} \alpha_i g_i$;
       \item[(v)] $m=\prod_{i=2}^{\nu}(\alpha_i+1)$.
\end{enumerate}
\end{prop}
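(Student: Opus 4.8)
The plan is to prove the equivalence by a cycle of implications, writing $B=\big\{\sum_{i=2}^{\nu}\lambda_i g_i : 0\le \lambda_i\le \alpha_i\big\}$ for the ``box'', so that (i) is the assertion $\ap(S)=B$. Throughout I would rely on three facts already recorded: the inclusion $\ap(S)\subseteq B$ and the inequality $m=|\ap(S)|\le\prod_{i=2}^{\nu}(\alpha_i+1)$ of Remark \ref{incl}; the fact that $f+m$ is the largest element of $\ap(S)$; and the characterization of symmetry recalled in the preliminaries, namely that $S$ is symmetric precisely when $f+m$ is the unique maximal element of $(\ap(S),\preceq)$. Two links are essentially free: for (v) $\Rightarrow$ (i) the chain $|\ap(S)|\le|B|\le\prod(\alpha_i+1)=m=|\ap(S)|$ collapses, forcing $\ap(S)=B$; and (i) $\Leftrightarrow$ (iv) follows by comparing top elements, since the integer-largest element of $B$ is $\sum_{i=2}^{\nu}\alpha_i g_i$, while for the converse every $b\in B$ satisfies $b\preceq\sum\alpha_i g_i=f+m\in\ap(S)$, so $b\in\ap(S)$ by Lemma \ref{easylemma}.

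The heart of the argument is (iv) $\Rightarrow$ (ii). Assuming $f+m=\sum_{i=2}^{\nu}\alpha_i g_i$, and hence $\ap(S)=B$, I would first observe that $b\preceq f+m$ for every $b\in B$, so $f+m$ is the \emph{unique} maximal element of $(\ap(S),\preceq)$ and $S$ is symmetric. The delicate point is that $f+m$ must then have a unique representation. Given any representation $f+m=\sum e_i g_i$, its $g_1$-coefficient vanishes (otherwise $f=(f+m)-m\in S$), and $e_ig_i\preceq f+m\in\ap(S)$ forces $e_ig_i\in\ap(S)$ by Lemma \ref{easylemma}, hence $e_i\le\alpha_i$; subtracting from $f+m=\sum\alpha_ig_i$ leaves $\sum_{i=2}^{\nu}(\alpha_i-e_i)g_i=0$ with all coefficients nonnegative, which forces $e_i=\alpha_i$ because the $g_i$ are positive. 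I expect this complementary-coordinate step to be the main obstacle, since it is what converts the purely combinatorial rectangularity into the representation-theoretic uniqueness demanded by (ii) and (iii).

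It remains to propagate this uniqueness and then to count. For (ii) $\Rightarrow$ (iii) I would use that, for symmetric $S$, the involution $s\mapsto(f+m)-s$ maps $\ap(S)$ to itself (a short check from the definition of symmetry), so that every $s\in\ap(S)$ satisfies $s\preceq f+m$; since $f+m$ has a unique representation, Lemma \ref{4,5} gives that each such $s$ has a unique representation, i.e. $\ap(S)$ is of unique expression. Finally, to reach (v), I would first re-derive $\ap(S)=B$ from (iii) (by the same computation showing $f+m=\sum\alpha_ig_i$, followed by Lemma \ref{easylemma}); unique expression then makes the parametrization $(\lambda_i)\mapsto\sum\lambda_ig_i$ of $B$ injective, since two distinct box-tuples with equal value would furnish two representations of an element of $\ap(S)$, whence $m=|\ap(S)|=|B|=\prod_{i=2}^{\nu}(\alpha_i+1)$, which is (v). Together with (v) $\Rightarrow$ (i) and (i) $\Leftrightarrow$ (iv) from the first paragraph, this closes the cycle through all five conditions.
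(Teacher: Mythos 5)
Your proposal is correct and follows essentially the same route as the paper: it uses the same ingredients (the inclusion $\ap(S)\subseteq B$ from Remark \ref{incl}, Lemma \ref{easylemma}, Lemma \ref{4,5}, the symmetry criterion via the unique maximal element of $(\ap(S),\preceq)$, and the counting argument for (v)), with the uniqueness of the representation of $f+m$ established by the same subtraction argument the paper uses for (i) $\Rightarrow$ (ii). The only difference is a harmless reshuffling of which implications in the cycle are proved directly.
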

\begin{proof}

$(i) \Rightarrow (ii)$ Since $\ap(S)$ is $\alpha$-rectangular, we
immediately get that $\sum_{i=2}^{\nu} \alpha_i g_i$ is the unique
maximal element in $(\ap(S), \preceq)$. Let us suppose that
$\sum_{i=2}^{\nu} \alpha_i g_i = \sum_{i=2}^{\nu} u_i g_i$ for
some non-negative integers $u_i$.
By Lemma \ref{easylemma}, $u_i g_i \in \ap(S)$ for
each $i$ and hence $u_i \leq \alpha_i$, by definition of
$\alpha_i$; it follows that $u_i=\alpha_i$ for each index $i$ and
the two representations coincide.

$(ii) \Leftrightarrow (iii)$ It follows by Lemma \ref{4,5} and by
the fact that $S$ is symmetric if and only if $f+m$ is the only maximal element of $(\ap(S), \preceq)$.

$(ii) \Rightarrow (iv)$ The unique maximal element in $(\ap(S),
\preceq)$ is necessarily $f+m$.
Therefore $\alpha_i g_i \preceq f+m$
for each $i=2,\ldots, \nu$. Since $f+m$ has a unique
representation, the thesis follows immediately.

$(iv) \Rightarrow (i)$ Since $f+m \in \ap(S)$ in general, it follows by
Lemma \ref{easylemma}.

$(i) \Rightarrow (v)$ It follows by $m=|\ap(S)|$ and by the fact that $\ap(S)$ is of unique expression.

$(v) \Rightarrow (i)$ We already noticed that $\ap(S) \subseteq
\Big\{\sum_{i=2}^{\nu} \lambda_i g_i \, | \, 0 \leq \lambda_i \leq
\alpha_i \Big\}$ and since
$m=\prod_{i=2}^{\nu}(\alpha_i+1)=|\ap(S)|$, we must have an
equality.
\end{proof}

\begin{ex}
We  apply the criterion above to show that the Ap\'ery set of
$S= \langle 12,15,16,18 \rangle$ is $\alpha$-rectangular, without even
computing the whole  $\ap(S)$. We determine the
$\alpha_i$'s:
\begin{eqnarray*}
2\cdot 15 &=& 12+ 18 \in 12 + S \\
2 \cdot 16 &=& 32 \notin 12+S\\
3 \cdot 16 &=& 4 \cdot 12 \in 12 +S\\
2 \cdot 18 &=& 3 \cdot 12 \in 12 + S
\end{eqnarray*}
and so $\alpha_2=1,\, \alpha_3=2,\, \alpha_4=1 $ and $m=12=
2\cdot2\cdot3 =\prod_{i=2}^{\nu}(\alpha_i+1)$.
\end{ex}

A
semigroup is called {\bf M-pure} if all the maximal elements in
the poset $(\ap(S), \preceq_M)$ have the same order; $M$-pure
semigroups were introduced in \cite{Br} along the way to the
characterization of  Gorenstein associated graded rings.
In analogy to \cite{Ro}, we say that a semigroup $S$ has {\bf
Ap\'ery set of unique maximal expression} if every element in
$\ap(S)$ has a unique maximal representation. In connection to
this, the number of maximal representations of elements in a
semigroup has been investigated recently (cf. \cite{BH},
\cite{BHJ}).
Now we give the criteria for the remaining classes. 

\begin{prop}[\cite{DMS2}, Theorem 2.16]\label{charactbeta}
The following conditions are equivalent:

\begin{enumerate}
     \item[(i)]
      $\ap(S)$ is $\beta$-rectangular;
       \item[(ii)] $S$ is $M$-pure, symmetric and  $\ap(S)$ is of unique maximal expression;
       \item[(iii)] $\ap(S)$ has a unique maximal element with respect to $\preceq_M$ and this element has a
       unique maximal representation;
       \item[(iv)] $f+m=\sum_{i=2}^{\nu} \beta_i g_i$;
       \item[(v)] $m=\prod_{i=2}^{\nu}(\beta_i+1)$.
\end{enumerate}
\end{prop}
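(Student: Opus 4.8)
The plan is to follow the proof of Proposition~\ref{charalpha}, replacing throughout the order $\preceq$ by $\preceq_M$, the property of unique representation by that of unique maximal representation, and the integers $\alpha_i$ by $\beta_i$; the genuinely new ingredient, absent in the $\alpha$-case, is $M$-purity. I would first record a handful of elementary facts. The $M$-adic order is superadditive, $\ord(a+b)\ge\ord(a)+\ord(b)$, since concatenating maximal representations of $a$ and $b$ gives a representation of $a+b$. No element of $\ap(S)$ has a representation involving $g_1=m$, for such a representation would put $s-m$ in $S$. In a maximal representation $s=\sum_j c_jg_j$ of an element $s\in\ap(S)$, each single term $c_jg_j$ is again maximal and lies in $\ap(S)$ by Lemma~\ref{easylemma}, so $\ord(c_jg_j)=c_j$ and therefore $c_j\le\beta_j$ by the definition of $\beta_j$. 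Finally, in a finite poset a unique maximal element is automatically the greatest, and $s\preceq_M t$ implies $s\preceq t$. I would also prove the $\preceq_M$-analogue of Lemma~\ref{4,5}: if $s\preceq_M t$ and $t$ has a unique maximal representation, then so does $s$; the argument is identical, concatenating a maximal representation of the complement $u$ (with $t=s+u$ and $\ord(s)+\ord(u)=\ord(t)$) with two hypothetical maximal representations of $s$ to build two distinct maximal representations of $t$.

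The core is the cycle $(i)\Rightarrow(iii)\Rightarrow(iv)\Rightarrow(i)$. Write $B=\{\sum_{i=2}^{\nu}\lambda_i g_i \mid 0\le\lambda_i\le\beta_i\}$, so that $(i)$ reads $\ap(S)=B$, and put $w=\sum_{i=2}^{\nu}\beta_ig_i$. For $(i)\Rightarrow(iii)$, every $s=\sum\lambda_ig_i\in\ap(S)=B$ satisfies $w-s=\sum(\beta_i-\lambda_i)g_i\in S$, so $w$ is the $\preceq$-greatest element of $\ap(S)$; hence $S$ is symmetric and $w=f+m$. Superadditivity and $\ord(\beta_ig_i)=\beta_i$ give $\ord(w)\ge\sum\beta_i$, while any maximal representation of $w$ has all coefficients $\le\beta_i$ by the partial-term bound, hence coefficient sum $\le\sum\beta_i$; therefore $\ord(w)=\sum\beta_i$, and since those bounds are attained only by taking every coefficient equal to $\beta_i$, the representation $\sum\beta_ig_i$ is the unique maximal one. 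For an arbitrary $s=\sum\lambda_ig_i\in\ap(S)$ with complement $u=\sum(\beta_i-\lambda_i)g_i$, the estimates $\ord(s)\ge\sum\lambda_i$ and $\ord(u)\ge\sum(\beta_i-\lambda_i)$ combine with superadditivity into $\sum\beta_i\le\ord(s)+\ord(u)\le\ord(w)=\sum\beta_i$, forcing equality; in particular $\ord(s)=\sum\lambda_i$ and $s\preceq_M w$. Thus $w$ is the unique $\preceq_M$-maximal element and has a unique maximal representation, which is $(iii)$. The step $(iii)\Rightarrow(iv)$ mimics $(ii)\Rightarrow(iv)$ of Proposition~\ref{charalpha}: the unique $\preceq_M$-maximal element is the greatest, hence equals $f+m$; from $\beta_ig_i\preceq_M f+m$ and $\ord(\beta_ig_i)=\beta_i$ the unique maximal representation of $f+m$ has $g_i$-coefficient at least $\beta_i$, while the partial-term bound makes it at most $\beta_i$, so $f+m=\sum\beta_ig_i$. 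For $(iv)\Rightarrow(i)$, $f+m=\sum\beta_ig_i\in\ap(S)$, so each element of $B$ lies $\preceq f+m$ and is in $\ap(S)$ by Lemma~\ref{easylemma}; with the inclusion $\ap(S)\subseteq B$ of Remark~\ref{incl} this gives $\ap(S)=B$.

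To incorporate $(ii)$ and $(v)$ I would argue as follows. For $(iii)\Rightarrow(ii)$: a unique $\preceq_M$-maximal element is trivially $M$-pure and, being the $\preceq$-greatest element, forces symmetry, while unique maximal expression follows by applying the $\preceq_M$-analogue of Lemma~\ref{4,5} to each $s\preceq_M f+m$. For $(ii)\Rightarrow(iii)$: symmetry makes $f+m$ the $\preceq$-greatest element, hence $\preceq_M$-maximal, with unique maximal representation by hypothesis; if some $t\ne f+m$ were also $\preceq_M$-maximal, then writing $f+m=t+u$ with $u\ne0$ we get $\ord(t)+\ord(u)\le\ord(f+m)$ by superadditivity, whereas $M$-purity gives $\ord(t)=\ord(f+m)$ and $\ord(u)\ge1$, a contradiction; so $f+m$ is the unique $\preceq_M$-maximal element. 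Finally $(v)\Rightarrow(i)$ is immediate from Remark~\ref{incl}, where $m=|\ap(S)|\le\prod_{i=2}^{\nu}(\beta_i+1)$ and equality forces $\ap(S)=B$; and $(i)\Rightarrow(v)$ follows because, by the computation in $(i)\Rightarrow(iii)$, every element of $B$ is a maximal representation of its own value, so two equal elements of $B$ would produce two distinct maximal representations of a single value, contradicting the unique maximal expression already established. Hence the $\prod_{i=2}^{\nu}(\beta_i+1)$ elements of $B$ are distinct and $m=|\ap(S)|=\prod_{i=2}^{\nu}(\beta_i+1)$.

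I expect the main obstacle to be the bookkeeping forced by the mere superadditivity of $\ord$: at every point where I assert a $\preceq_M$-relation or count maximal representations, the inequality $\ord(s)+\ord(u)\le\ord(s+u)$ must be squeezed to an equality from the opposite side, invariably by exhibiting explicit rectangular representations whose coefficient sums already attain $\ord(w)$. The most delicate single point is the equivalence with $(ii)$: eliminating spurious $\preceq_M$-maximal elements genuinely requires the $M$-purity hypothesis together with superadditivity, and transporting the unique-maximal-expression property down the poset requires the $\preceq_M$-analogue of Lemma~\ref{4,5} rather than Lemma~\ref{4,5} itself.
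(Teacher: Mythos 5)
Your argument is correct. Note, however, that the paper itself offers no proof of Proposition~\ref{charactbeta}: it is imported verbatim from \cite{DMS2}, Theorem 2.16, so there is no in-text proof to compare against. What you have produced is a self-contained proof built by transporting the paper's proof of Proposition~\ref{charalpha} from $(\preceq,\ \text{unique representation},\ \alpha_i)$ to $(\preceq_M,\ \text{unique maximal representation},\ \beta_i)$, and the adaptation is carried out soundly: the superadditivity of $\ord$, the observation that in a maximal representation of an element of $\ap(S)$ each partial term $c_jg_j$ again has order $c_j$ and lies in $\ap(S)$ (hence $c_j\le\beta_j$), and the $\preceq_M$-analogue of Lemma~\ref{4,5} are exactly the lemmas needed to make each inequality close to an equality. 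The cycle $(i)\Rightarrow(iii)\Rightarrow(iv)\Rightarrow(i)$ together with $(ii)\Leftrightarrow(iii)$ and $(i)\Leftrightarrow(v)$ covers all five conditions, and the one place where $M$-purity is genuinely needed --- ruling out a second $\preceq_M$-maximal element in $(ii)\Rightarrow(iii)$ --- is handled correctly via $\ord(u)\le 0$ versus $\ord(u)\ge 1$. I see no gap.
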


\begin{prop}[\cite{DMS2}, Theorem 2.22]\label{charact}
The following conditions are equivalent:
\begin{enumerate}
     \item[(i)]
      $\ap(S)$ is $\gamma$-rectangular;

     \item[(ii)] $f+m=\sum_{i=2}^{\nu} \gamma_i g_i$;

\item[(iii)] $m=\prod_{i=2}^{\nu}(\gamma_i+1)$.

\end{enumerate}
\end{prop}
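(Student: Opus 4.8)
The plan is to mirror the arguments for the $\alpha$- and $\beta$-rectangular cases in Propositions \ref{charalpha} and \ref{charactbeta}, since all three characterizations have the same shape. Write $R=\big\{\sum_{i=2}^{\nu}\lambda_i g_i : 0\le \lambda_i\le \gamma_i\big\}$ for the ``$\gamma$-box''. The two facts I would lean on throughout are the containment and the cardinality bound recorded in Remark \ref{incl}, namely $\ap(S)\subseteq R$ and $m=|\ap(S)|\le \prod_{i=2}^{\nu}(\gamma_i+1)$, together with the downward closure of the Ap\'ery set under $\preceq$ from Lemma \ref{easylemma}. I would then establish $(i)\Leftrightarrow (ii)$ and $(i)\Leftrightarrow (iii)$ separately.

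For $(i)\Rightarrow (ii)$, note that among the elements of $R$ the sum $\sum_{i=2}^{\nu}\gamma_i g_i$ is numerically the largest, as all $g_i>0$; but the numerically largest element of $\ap(S)$ is always $f+m$. If $\ap(S)=R$ these coincide, giving $f+m=\sum_{i=2}^{\nu}\gamma_i g_i$. For the converse $(ii)\Rightarrow (i)$, I would run the same argument as in $(iv)\Rightarrow(i)$ of Proposition \ref{charalpha}: every $\sum\lambda_i g_i\in R$ satisfies $\sum\lambda_i g_i\preceq \sum\gamma_i g_i=f+m$, since the difference $\sum(\gamma_i-\lambda_i)g_i$ lies in $S$; because $f+m\in\ap(S)$, Lemma \ref{easylemma} forces $\sum\lambda_i g_i\in\ap(S)$. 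Thus $R\subseteq\ap(S)$, and with the reverse inclusion of Remark \ref{incl} we obtain $\ap(S)=R$.

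The equivalence with $(iii)$ is then counting. For $(iii)\Rightarrow (i)$, the chain $m=|\ap(S)|\le |R|\le \prod_{i=2}^{\nu}(\gamma_i+1)=m$, in which the middle inequality holds because $R$ is the image of $\prod(\gamma_i+1)$ tuples under the sum map, forces every inequality to be an equality; in particular $|\ap(S)|=|R|$, and since $\ap(S)\subseteq R$ are finite, $\ap(S)=R$.

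The genuine obstacle is $(i)\Rightarrow (iii)$: assuming $\ap(S)=R$ one gets $m=|R|$, and the point is to show $|R|=\prod_{i=2}^{\nu}(\gamma_i+1)$, i.e. that the sum map is injective on the box, so that distinct tuples never collide. In the $\alpha$-case this is immediate because $\alpha$-rectangularity forces unique expression, but here $\gamma$-rectangularity is weaker and allows non-unique (merely non-maximal) expressions. My plan is to extract injectivity from the unique-maximal-representation clause in the definition of $\gamma_i$. First I would check that $\ord(\lambda_j g_j)=\lambda_j$ whenever $0\le\lambda_j\le\gamma_j$: any representation of $\lambda_j g_j$ longer than $\lambda_j$ would, after adding $(\gamma_j-\lambda_j)g_j$, produce a representation of $\gamma_j g_j$ longer than $\ord(\gamma_j g_j)=\gamma_j$, which is impossible. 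Then, given a putative collision $\sum\lambda_i g_i=\sum\mu_i g_i$ between distinct tuples, I would cancel the common part $\min(\lambda_i,\mu_i)$ coordinatewise to reduce to two nonzero tuples with disjoint supports representing the same element, and show that such a pair of expressions is incompatible with the uniqueness of the maximal representations guaranteed by the definition of the $\gamma_j$. Pinning down exactly how these disjoint-support expressions force a second maximal representation of one of the single-generator multiples $\gamma_j g_j$ is the delicate step, and is where I expect the real work to concentrate.
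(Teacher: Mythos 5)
The paper offers no proof of this proposition: it is quoted from \cite{DMS2} (Theorem 2.22), so there is no internal argument to measure yours against; the fair comparison is with the paper's proof of the analogous Proposition \ref{charalpha}, which your first three steps do mirror correctly. Specifically, $(i)\Rightarrow(ii)$ (the numerically largest element of the box is $\sum\gamma_ig_i$ and the numerically largest element of $\ap(S)$ is $f+m$), $(ii)\Rightarrow(i)$ (every box element is $\preceq f+m$, so Lemma \ref{easylemma} pulls the whole box into $\ap(S)$), and $(iii)\Rightarrow(i)$ (the counting squeeze against Remark \ref{incl}) are all sound.

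The gap is $(i)\Rightarrow(iii)$, which you identify but do not close: one must show the map $(\lambda_2,\dots,\lambda_\nu)\mapsto\sum\lambda_ig_i$ is injective on the box $0\le\lambda_i\le\gamma_i$, and this is precisely the substantive content of the cited theorem. Your preliminary reductions are fine ($\ord(\lambda_jg_j)=\lambda_j$ by padding up to $\gamma_jg_j$; cancellation to disjoint supports; one can even show the two colliding representations must have equal length, hence both maximal, by comparing against $\ord(f+m)=\sum\gamma_i$). But the concluding step is missing, and the route you sketch is not obviously viable: the uniqueness clause in the definition of $\gamma_j$ constrains only multiples $hg_j$ of a single generator, whereas the collision lives at a mixed element $w$, and uniqueness of (maximal) representations only descends along $\preceq$ or $\preceq_M$ (as in Lemma \ref{4,5}) --- it does not ascend from the $\gamma_jg_j$ to $w$. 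A warning sign that any naive transfer must fail: for $S=\langle 8,10,11,12\rangle$, which is $\gamma$-rectangular, the element $f+m=33=10+11+12=3\cdot 11$ already has two distinct maximal representations, so producing ``a second maximal representation somewhere above'' cannot by itself be the contradiction; the argument must genuinely use that both colliding tuples satisfy $\lambda_i\le\gamma_i$. As written the proof is incomplete on this implication; you would need to supply this combinatorial argument (or cite \cite{DMS2}, as the paper does).
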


\begin{prop}[\cite{RG}, Proposition 9.15]\label{chartel}
The following conditions are equivalent:
\begin{enumerate}
     \item[(i)]
      $S$ is telescopic;

     \item[(ii)] $f+m=\sum_{i=2}^{\nu}\tau_ig_i$;

     \item[(iii)] $m=\prod_{i=2}^{\nu} (\tau_i+1)$.
\end{enumerate}
\end{prop}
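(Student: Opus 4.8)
The plan is to mimic the proof of Proposition~\ref{charalpha}, with the $\tau_i$ playing the role of the $\alpha_i$. Put $R=\{\sum_{i=2}^{\nu}\lambda_i g_i \mid 0\le \lambda_i\le \tau_i\}$ and $\sigma=\sum_{i=2}^{\nu}\tau_i g_i$, so that condition $(i)$ reads $\ap(S)=R$. The first step is to establish the inclusion $\ap(S)\subseteq R$, the $\tau$-analogue of the containment recorded in Remark~\ref{incl}. Given $w\in\ap(S)$, I would pick the representation $w=\sum_{i=1}^{\nu}c_i g_i$ that is smallest in the reverse lexicographic order on the exponent vectors (comparing the coefficient of $g_\nu$ first, then that of $g_{\nu-1}$, and so on); such a minimum exists since the set of exponent vectors of $w$ is finite. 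If some coefficient satisfied $c_k\ge \tau_k+1$, then, using $(\tau_k+1)g_k\in\langle g_1,\dots,g_{k-1}\rangle$ from the definition of $\tau_k$, one could substitute so as to lower the coefficient of $g_k$ while changing only lower-index coefficients, producing a strictly smaller vector and contradicting minimality. Hence $c_i\le \tau_i$ for $i\ge 2$; moreover $c_1=0$, because $w\in\ap(S)$ means $w-m\notin S$. Thus $w\in R$, and consequently
$$
m=|\ap(S)|\le |R|\le \prod_{i=2}^{\nu}(\tau_i+1).
$$

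Next I would use that $\sigma$ is a $\preceq$-maximum of $R$: indeed $\sigma-w=\sum_{i=2}^{\nu}(\tau_i-\lambda_i)g_i\in S$ for every $w\in R$. Recall also that $f+m$ is the largest element of $\ap(S)$. For $(i)\Rightarrow(ii)$: if $\ap(S)=R$ then $\sigma\in\ap(S)$ dominates every element of $\ap(S)$ and is therefore its largest element, so $\sigma=f+m$. For $(ii)\Rightarrow(i)$: if $f+m=\sigma$ then, since $f+m\in\ap(S)$, every $w\in R$ satisfies $w\preceq\sigma=f+m\in\ap(S)$, hence $w\in\ap(S)$ by Lemma~\ref{easylemma}; this gives $R\subseteq\ap(S)$, and together with the inclusion above yields $\ap(S)=R$. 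The implication $(iii)\Rightarrow(i)$ is just as quick: the hypothesis $m=\prod_{i=2}^{\nu}(\tau_i+1)$ forces all inequalities in the displayed chain to be equalities, so $|\ap(S)|=|R|$, and since $\ap(S)\subseteq R$ we conclude $\ap(S)=R$.

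The implication that does not come for free is $(i)\Rightarrow(iii)$, which is where I expect the main difficulty to lie: it must invoke the telescopic hypothesis in an essential way rather than follow formally from the inclusion, since the bound $|R|\le\prod_{i=2}^{\nu}(\tau_i+1)$ can be strict for non-telescopic semigroups. I would argue using the numbers $d_i=\gcd(g_1,\dots,g_i)$, so that $d_i=\gcd(d_{i-1},g_i)$, $d_1=m$ and $d_\nu=1$. For every $i$ one has $(\tau_i+1)g_i\in\langle g_1,\dots,g_{i-1}\rangle\subseteq d_{i-1}\mathbb{N}$, and since $\gcd(g_i,d_{i-1})=d_i$ this forces $d_{i-1}/d_i$ to divide $\tau_i+1$; in particular $\prod_{i=2}^{\nu}(\tau_i+1)\ge\prod_{i=2}^{\nu}(d_{i-1}/d_i)=d_1/d_\nu=m$ always. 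Now if $S$ is telescopic, the classical characterization of telescopic semigroups (equivalent to $(i)$ by the remark after Definition~\ref{defclasses} and \cite{RG}) gives $(d_{i-1}/d_i)g_i\in\langle g_1,\dots,g_{i-1}\rangle$, whence $\tau_i+1\le d_{i-1}/d_i$ and therefore $\tau_i+1=d_{i-1}/d_i$ for every $i$. Telescoping the product then yields $\prod_{i=2}^{\nu}(\tau_i+1)=d_1/d_\nu=m$, which is $(iii)$. This closes the two cycles $(i)\Leftrightarrow(ii)$ and $(i)\Leftrightarrow(iii)$.
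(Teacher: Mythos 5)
The paper offers no proof of this proposition at all: it is quoted verbatim from \cite{RG}, so your attempt can only be judged on its own merits. On those terms, three of the four implications you establish are correct and self-contained. The revlex-minimal-representation argument for $\ap(S)\subseteq R$ is sound (it is the $\tau$-analogue of the containment in Remark \ref{incl}), and your derivations of $(i)\Leftrightarrow(ii)$ and $(iii)\Rightarrow(i)$ from this inclusion, Lemma \ref{easylemma}, and the observation that $\sigma=\sum_{i=2}^{\nu}\tau_i g_i$ is a $\preceq$-maximum of $R$ exactly parallel the paper's proof of Proposition \ref{charalpha}. The divisibility $d_{i-1}/d_i\mid\tau_i+1$ is also correct and is exactly the right tool for what remains.

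The gap is in $(i)\Rightarrow(iii)$: you correctly identify it as the hard direction and then step around it. You obtain $\tau_i+1\le d_{i-1}/d_i$ from ``the classical characterization of telescopic semigroups (equivalent to $(i)$ by the remark after Definition \ref{defclasses} and \cite{RG})''. But the equivalence of the Ap\'ery-set condition $(i)$ with the classical gcd-based definition is precisely the content of \cite[Proposition 9.15]{RG} --- the statement under proof --- so as written this direction is circular. It can be closed with the machinery you already have. Set $e_i=d_{i-1}/d_i$ and $R'=\big\{\sum_{i=2}^{\nu}\lambda_i g_i \mid 0\le\lambda_i\le e_i-1\big\}$. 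The same gcd computation you used to show $e_i\mid\tau_i+1$, applied to the difference of two such sums at the largest index where they disagree, shows that the elements of $R'$ are pairwise incongruent modulo $m$, whence $|R'|=\prod_{i=2}^{\nu}e_i=m$. Since $e_i-1\le\tau_i$, hypothesis $(i)$ gives $R'\subseteq R=\ap(S)$, and $|R'|=m=|\ap(S)|$ forces $R'=\ap(S)$. In particular $\sigma\in R'$, so $\sum_{i=2}^{\nu}\tau_i g_i\le\sum_{i=2}^{\nu}(e_i-1)g_i$, which forces $\tau_i=e_i-1$ for every $i$ and hence $\prod_{i=2}^{\nu}(\tau_i+1)=\prod_{i=2}^{\nu}e_i=m$.
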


\begin{prop}[\cite{RG}, Proposition 9.15]\label{charfree}
The following conditions are equivalent:
\begin{enumerate}
     \item[(i)]
      $S$ is free;

        \item[(ii)] there is an arrangement $\mathbf{n}$ of the minimal
        generators such that $f+n_1=\sum_{i=2}^{\nu}\phi_in_i$;

        \item[(iii)] there is an arrangement $\mathbf{n}$ of the minimal
        generators such that $n_1=\prod_{i=2}^{\nu} (\phi_i+1)$.
\end{enumerate}
\end{prop}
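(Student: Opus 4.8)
The plan is to fix a single rearrangement $\mathbf{n}=\{n_1,\dots,n_\nu\}$ and prove the three conditions equivalent with this arrangement held fixed; the existential statements in (ii) and (iii) then follow at once. Throughout I write $B=\Big\{\sum_{i=2}^{\nu}\lambda_i n_i \,|\, 0\le\lambda_i\le\phi_i\Big\}$ and $d_i=\gcd(n_1,\dots,n_i)$, so that $d_1=n_1$ and $d_\nu=1$. The argument runs parallel to the proofs of Propositions \ref{charalpha} and \ref{chartel}, the only new feature being that the relevant Ap\'ery set is taken with respect to $n_1$ rather than the multiplicity.

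First I would record two elementary facts. (a) One always has $\ap(S,n_1)\subseteq B$: starting from any representation of $s\in\ap(S,n_1)$ and repeatedly replacing $(\phi_i+1)n_i$ by an element of $\langle n_1,\dots,n_{i-1}\rangle$, processed from $i=\nu$ downward, rewrites $s$ as $a_1n_1+\sum_{i\ge2}\lambda_in_i$ with $0\le\lambda_i\le\phi_i$; since $s-n_1\notin S$ one must have $a_1=0$, so $s\in B$. (b) From $\langle n_1,\dots,n_{i-1}\rangle\subseteq d_{i-1}\mathbb{Z}$ and $\gcd(d_{i-1}/d_i,\,n_i/d_i)=1$ one gets $\tfrac{d_{i-1}}{d_i}\mid(\phi_i+1)$, hence $\phi_i+1\ge d_{i-1}/d_i$ and $\prod_{i=2}^{\nu}(\phi_i+1)\ge\prod_{i=2}^{\nu}\tfrac{d_{i-1}}{d_i}=n_1$. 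Together with (a) this yields the chain $n_1=|\ap(S,n_1)|\le|B|\le\prod_{i=2}^{\nu}(\phi_i+1)$.

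The implication (iii)$\Rightarrow$(i) is then immediate: if $n_1=\prod(\phi_i+1)$ the chain in (b) collapses to equalities, so $|B|=n_1=|\ap(S,n_1)|$, and since $\ap(S,n_1)\subseteq B$ with $B$ finite we get $\ap(S,n_1)=B$. For (i)$\Rightarrow$(ii) I note that the corner $\sum_{i=2}^\nu\phi_in_i$ is the unique $\preceq$-maximal element of $B$ (every box element is $\preceq$ it), hence of $\ap(S,n_1)$, and is therefore necessarily its largest element $f+n_1$; this gives $f+n_1=\sum\phi_in_i$. For the converse (ii)$\Rightarrow$(i): given $f+n_1=\sum\phi_in_i$, take any $b=\sum\lambda_in_i\in B$ and set $b'=\sum(\phi_i-\lambda_i)n_i\in B\subseteq S$, so $b+b'=f+n_1$; were $b-n_1\in S$ we would get $f=(b-n_1)+b'\in S$, a contradiction, so $b\in\ap(S,n_1)$ and $B\subseteq\ap(S,n_1)$, giving equality by (a).

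The delicate point is closing the loop to (iii), i.e. upgrading $\ap(S,n_1)=B$ to the product formula $n_1=\prod(\phi_i+1)$; by (b) this is the same as proving that the parametrization of $B$ is injective, equivalently that $\tfrac{d_{i-1}}{d_i}=\phi_i+1$ for every $i$. Cardinality alone does not suffice, since a priori $B$ could have the correct size $n_1$ while the box map collapses. The clean way, mirroring the corner argument of Proposition \ref{charalpha}, is to show that $f+n_1$ has a \emph{unique} representation and then invoke Lemma \ref{4,5}: as every element of $\ap(S,n_1)=B$ is $\preceq f+n_1$, uniqueness would cascade to all of $B$, forcing the box parametrization to be injective and hence (iii). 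Proving uniqueness of the representation of $f+n_1$ is exactly where the gluing structure must enter, and I expect this to be the main obstacle: the natural route is induction on $\nu$, realizing $S$ as the gluing of $S_{\nu-1}=\langle n_1,\dots,n_{\nu-1}\rangle$ and $\langle n_\nu\rangle$, using the divisibility $\tfrac{d_{\nu-1}}{d_\nu}\mid\phi_\nu+1$ from (b) together with the conductor identity of (ii) to force $\phi_\nu+1=d_{\nu-1}/d_\nu$, and then relating $\ap(S,n_1)=B$ to $\ap(S_{\nu-1},n_1)$ in order to apply the inductive hypothesis. Showing that the single scalar identity $f+n_1=\sum\phi_in_i$ propagates to a genuine gluing at every stage is the crux of the argument.
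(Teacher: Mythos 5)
The paper offers no proof of this proposition (it is imported verbatim from \cite[Proposition 9.15]{RG}), so there is nothing internal to compare against; judged on its own terms, your proposal correctly establishes (iii)$\Rightarrow$(i), (i)$\Rightarrow$(ii) and (ii)$\Rightarrow$(i) --- the downward box-reduction giving $\ap(S,n_1)\subseteq B$, the identification of the corner with $f+n_1$, and the complementation trick are all sound. The gap is the return to (iii), and the route you propose for it rests on a false claim: $f+n_1$ need \emph{not} have a unique representation when $S$ is free. The paper itself supplies a counterexample: $S=\langle 8,10,15\rangle$ is telescopic, hence free with the increasing arrangement and $n_1=m=8$, yet $f+n_1=45=3\cdot 10+1\cdot 15=3\cdot 15$ has two representations. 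Structurally, your claim together with Lemma \ref{4,5} and Proposition \ref{charalpha}(ii) would force every telescopic semigroup to have $\alpha$-rectangular Ap\'ery set, contradicting the strictness of the implications in Theorem \ref{main}. Note that the offending representation $3\cdot 15$ lies \emph{outside} the box (here $\phi_3=1$): injectivity of the box parametrization, which is what you actually need, is strictly weaker than unique expression, which is what you set out to prove, so the ``cascade via Lemma \ref{4,5}'' cannot be repaired.

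The correct finish is already latent in your step (b). By \cite[Lemma 9.13]{RG}, setting $\overline{c}_i=d_{i-1}/d_i$, the set $\big\{\sum_{i=2}^\nu\lambda_i n_i \,|\, 0\le\lambda_i\le\overline{c}_i-1\big\}$ is a complete residue system modulo $n_1$, and $\overline{c}_i$ divides $\phi_i+1$. Suppose $\ap(S,n_1)=B$ but $\phi_{i_0}+1>\overline{c}_{i_0}$ for some $i_0$. Then $\overline{c}_{i_0}\le\phi_{i_0}$, so $\overline{c}_{i_0}n_{i_0}\in B=\ap(S,n_1)$; on the other hand $\overline{c}_{i_0}n_{i_0}$ lies in the group generated by $n_1,\dots,n_{i_0-1}$, hence is congruent modulo $n_1$ to some $\sum_{j=2}^{i_0-1}\mu_j n_j$ with $0\le\mu_j\le\overline{c}_j-1\le\phi_j$, which also belongs to $B$. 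Since $\ap(S,n_1)$ meets each residue class exactly once, the two elements coincide, so $\overline{c}_{i_0}n_{i_0}\in\langle n_1,\dots,n_{i_0-1}\rangle$, contradicting the minimality defining $\phi_{i_0}$. Hence $\phi_i+1=\overline{c}_i$ for all $i$ and $\prod_{i=2}^\nu(\phi_i+1)=\prod_{i=2}^\nu\overline{c}_i=n_1$, which is (iii). This is exactly the mechanism the paper itself exploits in the $\gamma$-rectangular $\Rightarrow$ free step of Theorem \ref{main}, where it squeezes $n_1=\prod\overline{c}_i\le\prod(\phi_i+1)$ against an upper bound.
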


The next lemma is a crucial step in establishing one of the  implications in Theorem \ref{main}.

\begin{lem}\label{lemmaSigma}
Let $S$ have $\gamma$-rectangular Ap\'ery set.
For each  $i=2,\dots,\nu$ there exist relations
\begin{equation}\label{eq1}\tag{$\star $}
(\gamma_i+1)g_i=\lambda_{i,1}g_1+\lambda_{i,2}g_2+\cdots+\lambda_{i,\nu}g_{\nu}
\end{equation}
and a permutation
$\sigma$ of  $\{1,\ldots,\nu\}$
 such that $\sigma(1)=1$ and $\lambda_{\sigma(i),\sigma(j)}=0$ if  $i\leq j$, $j \geq 2$.
\end{lem}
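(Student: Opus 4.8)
The plan is to read off the required relations from a set–theoretic factorization of the cyclic group $\mathbb{Z}/m\mathbb{Z}$ that is forced by the rectangular Ap\'ery set. First I would record the group–theoretic content of the hypothesis. By Proposition \ref{charact} we have $m=\prod_{i=2}^{\nu}(\gamma_i+1)$, and since $\ap(S)=\{\sum_{i=2}^{\nu}\lambda_i g_i\mid 0\le\lambda_i\le\gamma_i\}$ has exactly $m$ elements, the assignment sending a tuple $(\lambda_2,\dots,\lambda_\nu)$ in the box $\prod_{i=2}^{\nu}\{0,\dots,\gamma_i\}$ to the class $\sum_{i=2}^{\nu}\lambda_i g_i \bmod m$ is a bijection onto $\mathbb{Z}/m\mathbb{Z}$: it is onto because $\ap(S)$ contains one element of each residue class modulo $m$, and then injective by cardinality. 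Writing $\overline{x}$ for the class of $x$ and $A_i=\{0,\overline{g_i},2\overline{g_i},\dots,\gamma_i\overline{g_i}\}$, this bijection says exactly that
\[
\mathbb{Z}/m\mathbb{Z}=A_2\oplus A_3\oplus\cdots\oplus A_\nu,
\]
that is, every element is uniquely a sum of one element from each $A_i$, where each $A_i$ is a cyclic subset (an arithmetic progression through $0$) of size $\gamma_i+1$. I would also keep in mind the elementary fact, used below, that every box element is the least element of $S$ in its residue class, since it lies in $\ap(S)$.

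The crux is to produce at each stage a generator whose $(\gamma+1)$–multiple is ``already available''. This is precisely the content of Haj\'os's theorem (the group form of Minkowski's conjecture): in any factorization of a finite abelian group into cyclic subsets through the origin, at least one factor is a subgroup. Applied to the factorization above it yields an index $k$ for which $A_k$ is a subgroup, equivalently $(\gamma_k+1)\overline{g_k}=0$, i.e. $(\gamma_k+1)g_k$ is a positive multiple of $m=g_1$. I set $\sigma(2)=k$, so that the relation for $g_{\sigma(2)}$ reads $(\gamma_{\sigma(2)}+1)g_{\sigma(2)}=c\,g_1$, which already has the required shape.

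Next I would iterate. Quotienting $\mathbb{Z}/m\mathbb{Z}$ by the subgroup $A_k$ carries the remaining $A_i$ to a factorization of the quotient (of order $m/(\gamma_k+1)$) into cyclic subsets of the same sizes $\gamma_i+1$, so Haj\'os's theorem applies again and peels generators off one at a time. This builds a permutation $\sigma$ with $\sigma(1)=1$ and a chain of subgroups $H^{(t-1)}=A_{\sigma(2)}\oplus\cdots\oplus A_{\sigma(t-1)}$, each a ``sub--box'' with unique representations. At stage $t$ one gets $(\gamma_{\sigma(t)}+1)\overline{g_{\sigma(t)}}\in H^{(t-1)}$, hence
\[
(\gamma_{\sigma(t)}+1)g_{\sigma(t)}\equiv\sum_{s=2}^{t-1}\lambda_s\,g_{\sigma(s)}\pmod m,\qquad 0\le\lambda_s\le\gamma_{\sigma(s)}.
\]

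Finally I would upgrade this congruence to an honest relation with nonnegative coefficients. The sum $\sum_{s=2}^{t-1}\lambda_s g_{\sigma(s)}$ is a box element, hence the least element of $S$ in its class; since $(\gamma_{\sigma(t)}+1)g_{\sigma(t)}\in S$ lies in the same class, it is at least this box element, so their difference is a nonnegative multiple $c\,g_1$ of $m$. This gives
\[
(\gamma_{\sigma(t)}+1)g_{\sigma(t)}=c\,g_1+\sum_{s=2}^{t-1}\lambda_s\,g_{\sigma(s)},
\]
a relation involving only $g_{\sigma(1)},\dots,g_{\sigma(t-1)}$, which is exactly the condition $\lambda_{\sigma(t),\sigma(j)}=0$ for $j\ge t$. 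The main obstacle is the existence of a subgroup factor at each stage: the passage to the quotient, the description of $H^{(t-1)}$, and the nonnegativity step are all routine bookkeeping with the bijection and the minimality of box elements, whereas this existence is genuinely nontrivial and is exactly where Haj\'os's theorem is needed.
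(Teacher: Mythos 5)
Your argument is correct, but it takes a genuinely different route from the paper's. You translate $\gamma$-rectangularity, via Proposition \ref{charact}, into a factorization $\mathbb{Z}/m\mathbb{Z}=A_2\oplus\cdots\oplus A_\nu$ into cyclic simplices and invoke Haj\'os's theorem at each stage to find a factor that becomes a subgroup modulo the previously accumulated ones; the final lift from a congruence to an honest relation with a nonnegative coefficient on $g_1$, using that box elements are minimal in their residue class, is sound. The paper instead stays entirely inside the semigroup: for each $i$ it extracts a relation for $(\gamma_i+1)g_i$ directly from the definition of $\gamma_i$ (splitting into the cases $(\gamma_i+1)g_i\in\ap(S)$ and $(\gamma_i+1)g_i\notin\ap(S)$), and then shows by an elementary contradiction --- summing all the relations, using that $\sum_{i=2}^{\nu}\gamma_i g_i$ lies in $\ap(S)$ with maximal order, and that the largest generator cannot appear nontrivially in its own relation --- that some column of the coefficient matrix is identically zero, after which triangularity follows by induction on submatrices. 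Your approach is conceptually illuminating (it exhibits the Ap\'ery set as a box tiling of $\mathbb{Z}/m\mathbb{Z}$ and uses nothing about the $\gamma_i$ beyond set-theoretic rectangularity), but it outsources the combinatorial heart to Haj\'os's deep theorem, whereas the paper's induction is short and self-contained. One small point you should make explicit: the deduction ``$A_k$ is a subgroup $\Rightarrow(\gamma_k+1)\overline{g_k}=0$'' needs $\gamma_k\geq 1$ so that $\overline{g_k}\in A_k$ and $A_k=\langle\overline{g_k}\rangle$; this holds because every minimal generator $g_i$ with $i\geq 2$ lies in $\ap(S)$, has order $1$ and a unique (maximal) representation. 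Also, you build $\sigma$ from the bottom up ($\sigma(2)$ first) while the paper builds it from the top down ($\sigma(\nu)$ first); that difference is purely cosmetic.
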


\begin{proof}
Fix  an index $i \in \{2, \ldots,\nu\}$. By definition of
$\gamma_i$ we have two possible cases:
\begin{itemize}
\item[(I)] If $(\gamma_i+1)g_i\in \ap(S)$,
then the representation $(\gamma_i+1)g_i$ is not { maximal or
it is not the} unique maximal one; hence there is a different
representation $ \sum_{j=1}^{\nu} \lambda_{i,j}g_j $ of the same
element with $\gamma_i +1 \leq \sum_{j=1}^{\nu} \lambda_{i,j}$.
Notice that $(\gamma_i+1)g_i\in \ap(S)$ forces $\lambda_{i,1}=0$.

\item[(II)]  If $(\gamma_i+1)g_i\notin \ap(S)$,
then
we can write $(\gamma_i+1)g_i = \sum_{j=1}^{\nu} \lambda_{i,j}g_j$ for some non-negative integers $\lambda_{i,j}$,
with $\lambda_{i,1}>0$.
\end{itemize}
It is useful to consider the   square matrix
obtained from the relations (\ref{eq1}) found in (I) and (II) leaving out the coefficients of $g_1$
$$
{\bf L}=\begin{pmatrix} \lambda_{2,2}&\lambda_{2,3}&\dots&\lambda_{2,\nu}\\
\lambda_{3,2}&\lambda_{3,3}&\dots&\lambda_{3,\nu}\\
\dots&\dots&\dots&\dots\\
\lambda_{\nu,2}&\lambda_{\nu,3}&\dots&\lambda_{\nu,\nu}\end{pmatrix}.
$$

Now we construct a permutation $\sigma$ of $\{1,2,\dots,\nu\}$ satisfying
$\sigma(1)=1$ and  $\lambda_{\sigma(i),\sigma(j)}=0$ whenever  $i\le j$ and $j \geq 2$,
or equivalently such that
the square matrix
$$
{\bf L}_\sigma=\begin{pmatrix} \lambda_{\sigma(2),\sigma(2)}&\lambda_{\sigma(2),\sigma(3)}&\dots&\lambda_{\sigma(2),\sigma(\nu)}\\
\lambda_{\sigma(3),\sigma(2)}&\lambda_{\sigma(3),\sigma(3)}&\dots&\lambda_{\sigma(3),\sigma(\nu)}\\
\dots&\dots&\dots&\dots\\
\lambda_{\sigma(\nu),\sigma(2)}&\lambda_{\sigma(\nu),\sigma(3)}&\dots&\lambda_{\sigma(\nu),\sigma(\nu)}\end{pmatrix}
$$
 is lower triangular with zeros in the diagonal. 
 We proceed by decreasing induction on $h$.
 
For the basis of the induction   $h=\nu$ it is enough to show that there exists a column in $\bf L$ with all zero entries.
Let us suppose by contradiction that every column in $\bf L$ has a non zero element, that is,
for every $j\ge 2$ there exists $\tau(j)$ such that $\lambda_{\tau(j),j}>0$.
Taking the sum over all the relations (\ref{eq1}) we obtain
$$\sum_{i=2}^{\nu}(\gamma_i+1)g_i=\sum_{i=2}^{\nu}\sum_{j=1}^{\nu} \lambda_{i,j}g_j.$$
and subtracting $\sum_{i=2}^{\nu}g_i$ from both sides  we get
 $$u:=\sum_{i=2}^{\nu}\gamma_ig_i=\sum_{j=1}^{\nu}\sum_{i\ne 1,\tau(j)}^{\nu} \lambda_{i,j}g_j+\sum_{i=2}^{\nu}( \lambda_{\tau(i),i}-1)g_i.$$
 As $u\in\ap(S)$
by $\gamma$-rectangularity,
we necessarily have
$  \lambda_{i,1}=0$  and hence case (II) above is not possible for any $i\in \{2,\dots,\nu\}$.
We get by (I) that $\sum_{j=1}^{\nu} \lambda_{i,j} \geq \gamma_i+1$ for every $i$.
Furthermore,
the representation $u=\sum_{i=2}^{\nu}\gamma_ig_i$ is maximal by \cite[Lemma 2.19]{DMS2} and so if there exists $i$ such that  $\sum_{j=1}^{\nu} \lambda_{i,j}>\gamma_i+1$
then it follows
 $$\ord(u)=\sum_{i=2}^{\nu}\gamma_i< \sum_{j=1}^{\nu}\sum_{i\ne 1,\tau(j)}^{\nu} \lambda_{i,j}+\sum_{i=2}^{\nu}( \lambda_{\tau(j),j}-1)\leq \ord(u)$$
 yielding a contradiction;
thus  $\sum_{j=1}^{\nu} \lambda_{i,j}=\gamma_i+1$ for every $i$.
In particular for the index of the largest generator we have
$$
(\gamma_{\nu}+1)g_{\nu}=\sum_{j=1}^{\nu} \lambda_{\nu,j}g_j \quad
 \mbox{ and  }\quad \sum_{j=1}^{\nu} \lambda_{\nu,j}=\gamma_{\nu}+1.$$
 But  $g_j<g_{\nu}$ for $j\ne\nu$ forces $\lambda_{\nu,j}=0$ and  $\lambda_{\nu,\nu}=\gamma_{\nu}+1$, contradicting
 the fact that in (I) we found a different representation.
So the $p$-th column of $\bf L$ consists of zeros for some $p\geq 2$,
and we let  $\sigma(\nu)=p$.

Now let $1<h<\nu$ and suppose that  for every
$j\in\{ \sigma(\nu),\sigma(\nu-1),\dots,\sigma(h+1)\}$
and  $i\le j$ we have $\lambda_{\sigma(i),\sigma(j)}=0$. By
repeating the same argument as in the basis of the induction for
the  submatrix of $\bf L$ indexed by $i,j\in
\{2,\dots,\nu\}\setminus\{  \sigma(\nu),\sigma(\nu-1),\dots,\sigma(h+1)\}
$ we get a new index  $\sigma(h)$ for which the statement is
true, and the inductive step follows.
\end{proof}

In order to present the main theorem of the paper,
we need to give one more definition.
A numerical semigroup $S$ has a {\bf unique Betti element} if the first syzygies of the semigroup ring $\Bbbk[[t^S]]$
have all the same degree (in the $S$-grading; see \cite{GOR} for a purely numerical definition).
In \cite{GOR} the authors prove that   $S=\langle g_1,\ldots,g_\nu \rangle$ has a unique Betti element if and only if there exist pairwise coprime integers  $a_1, \ldots, a_\nu$ greater than one  such that $g_i = \prod_{j\ne i} a_i$;
 these semigroups are shown to be complete intersection.
Moreover
in \cite{BF}
it is shown that for such a semigroup $S$ the tangent cone of the semigroup ring $\Bbbk[[t^S]]$ is a complete intersection,
implying thus that $\ap(S)$ is $\gamma$-rectangular by \cite[Theorem 3.6]{DMS2}.

 \begin{thm}\label{main}
 Let $S$ be a numerical semigroup. Consider the following conditions:
\begin{enumerate}
  \item[(1)] $S$ is associated to a plane branch;

   \item[(2)] $S$ has a unique Betti element;

     \item[(3)] $S$ is telescopic;

     \item[(4)] $S$ has $\alpha$-rectangular Ap\'ery set;

     \item[(5)] $S$ has $\beta$-rectangular Ap\'ery set;

     \item[(6)] $S$ has $\gamma$-rectangular Ap\'ery set;

     \item[(7)] $S$ is free;

     \item[(8)] $S$ is complete intersection.
\end{enumerate}
Then  $(1)\Rightarrow(3)\Rightarrow (5) $,
$(2)\Rightarrow(4)\Rightarrow (5)$,
 $(1)\Rightarrow(4)$,
$(2)\Rightarrow(3)$,
   $(5)\Rightarrow (6)\Rightarrow(7) \Rightarrow(8)$
   (compare Figure \ref{diagram}).
Moreover, all the implications are strict.
\end{thm}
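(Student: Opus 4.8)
The plan is to route almost every implication through the numerical product criteria of Propositions \ref{charalpha}, \ref{charactbeta}, \ref{charact}, \ref{chartel} and \ref{charfree}, together with the universal chain $m\le\prod_{i=2}^{\nu}(\gamma_i+1)\le\prod_{i=2}^{\nu}(\beta_i+1)\le\prod_{i=2}^{\nu}(\alpha_i+1)$ of Remark \ref{incl}. Three steps are then immediate. The inclusion $(1)\Rightarrow(3)$ holds by definition, since being associated to a plane branch means being telescopic plus a growth condition. For $(4)\Rightarrow(5)$, $\alpha$-rectangularity gives $m=\prod_{i=2}^{\nu}(\alpha_i+1)$, so the chain forces $m=\prod_{i=2}^{\nu}(\beta_i+1)$, i.e. $\beta$-rectangularity by Proposition \ref{charactbeta}; the step $(5)\Rightarrow(6)$ is identical with $(\beta,\gamma)$ replacing $(\alpha,\beta)$, invoking Proposition \ref{charact}.

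For $(3)\Rightarrow(5)$ I would first prove that a telescopic semigroup satisfies $\beta_i=\tau_i$ for all $i$. Since $|\ap(S)|=m=\prod_{i=2}^{\nu}(\tau_i+1)$ by Proposition \ref{chartel}, the parametrization $(\lambda_i)\mapsto\sum_{i=2}^{\nu}\lambda_ig_i$ of the box onto $\ap(S)$ is a bijection, so every Apéry element has a unique box representation; in particular $\tau_ig_i\in\ap(S)$ has a unique, hence maximal, representation, giving $\tau_i\le\beta_i$, while $\beta_ig_i\in\ap(S)$ together with the maximality of box representations forces $\beta_i\le\tau_i$. Then $m=\prod_{i=2}^{\nu}(\tau_i+1)=\prod_{i=2}^{\nu}(\beta_i+1)$ and Proposition \ref{charactbeta} applies. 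The implications $(2)\Rightarrow(3)$ and $(2)\Rightarrow(4)$ I would treat by direct computation on a unique Betti semigroup, writing $g_i=\prod_{j\ne i}a_j$ with the $a_j>1$ pairwise coprime. Reducing a relation $hg_i\in\langle g_1,\dots,g_{i-1}\rangle$ modulo $a_i$ and using coprimality shows $a_i\mid h$, so $\tau_i=a_i-1$ and $\prod_{i=2}^{\nu}(\tau_i+1)=\prod_{i=2}^{\nu}a_i=g_1=m$, which is telescopicity by Proposition \ref{chartel}. Moreover $a_ig_i=a_1g_1\in m+S$ shows $\alpha_i\le a_i-1$, whence $\prod_{i=2}^{\nu}(\alpha_i+1)\le\prod_{i=2}^{\nu}a_i=m$; the chain of Remark \ref{incl} upgrades this to equality, giving $\alpha$-rectangularity by Proposition \ref{charalpha}. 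Finally, for $(1)\Rightarrow(4)$ I would exploit the growth condition $(\tau_i+1)g_i<g_{i+1}$: it confines every representation of a small multiple of $g_i$ to the generators $g_1,\dots,g_i$, and combined with the minimality defining $\tau_i$ this rules out $(\tau_i+1)g_i\in\ap(S)$, so $\alpha_i=\tau_i$ and we conclude as before.

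The core of the theorem is $(6)\Rightarrow(7)$, and here I would use Lemma \ref{lemmaSigma}. Choosing the rearrangement $n_j=g_{\sigma(j)}$, the lower triangularity of ${\bf L}_\sigma$ says exactly that $(\gamma_{\sigma(i)}+1)n_i$ is a combination of $n_1=g_1$ and the $n_j$ with $j<i$, so $(\gamma_{\sigma(i)}+1)n_i\in\langle n_1,\dots,n_{i-1}\rangle$ and therefore $\phi_i\le\gamma_{\sigma(i)}$. Since $\ap(S,n_1)$ is contained in the box $\{\sum_{i=2}^{\nu}\lambda_in_i\, |\, 0\le\lambda_i\le\phi_i\}$ for any arrangement, one has $m=n_1=|\ap(S,n_1)|\le\prod_{i=2}^{\nu}(\phi_i+1)\le\prod_{i=2}^{\nu}(\gamma_{\sigma(i)}+1)=\prod_{i=2}^{\nu}(\gamma_i+1)=m$, forcing equality and hence freeness by Proposition \ref{charfree}. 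The last arrow $(7)\Rightarrow(8)$ is the classical fact that free semigroups are complete intersections, being obtained from $\mathbb{N}$ by iterated gluing; I would simply cite \cite{RG}.

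To establish strictness I would exhibit, for each missing arrow of Figure \ref{diagram}, a single semigroup lying in the source class but not the target, verifying membership by the product criteria above (or directly with GAP): for instance a complete intersection that is not free, a free but not $\gamma$-rectangular example, a $\gamma$- but not $\beta$-rectangular one, a $\beta$- but not $\alpha$-rectangular and non-telescopic one, a telescopic semigroup that is not a plane branch, and examples separating $(2)$, $(3)$ and $(4)$ from one another. The principal difficulty I anticipate is not any single forward implication (the product criteria and Lemma \ref{lemmaSigma} do most of the work) but rather producing a coherent family of counterexamples that simultaneously realizes every non-implication, so that the diagram is proved complete with no superfluous edges; the confinement argument underlying $(1)\Rightarrow(4)$ is the one forward step where I expect the verification to require genuine care.
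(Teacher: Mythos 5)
Your overall strategy --- routing everything through the product criteria of Propositions \ref{charalpha}--\ref{charfree}, Remark \ref{incl}, and Lemma \ref{lemmaSigma} --- is exactly the paper's, and your treatments of $(2)\Rightarrow(3)$, $(2)\Rightarrow(4)$, $(4)\Rightarrow(5)\Rightarrow(6)$, $(6)\Rightarrow(7)$ and $(7)\Rightarrow(8)$ match the published proof. But your argument for $(3)\Rightarrow(5)$ contains a false step. From the bijection of the box onto $\ap(S)$ you infer that $\tau_i g_i$ has a \emph{unique} representation; this does not follow, because an Ap\'ery element may admit representations outside the box. In $S=\langle 8,10,15\rangle$ (telescopic, with $\tau_2=3$, $\tau_3=1$) the element $\tau_2 g_2=30$ also equals $2\cdot 15$, so its representation is not unique. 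Moreover the inequality you actually need for the product criterion is $\beta_i\le\tau_i$ (the other direction $\tau_i\le\beta_i$ only reproves $m\le\prod_{i=2}^{\nu}(\beta_i+1)$, which Remark \ref{incl} already gives), and you justify it only by an unproved assertion of ``maximality of box representations.'' The paper's fix is short and you should adopt it: $(\tau_i+1)g_i=\lambda_1g_1+\cdots+\lambda_{i-1}g_{i-1}$ with all $g_j<g_i$ forces $\lambda_1+\cdots+\lambda_{i-1}>\tau_i+1$, hence $\ord(hg_i)>h$ for every $h\ge\tau_i+1$ and so $\beta_i\le\tau_i$. A similar remark applies to $(1)\Rightarrow(4)$: your one-sentence ``confinement'' argument must be replaced by a genuine induction on $i$, in which one assumes $(\tau_i+1)g_i\in\ap(S)$, uses Lemma \ref{easylemma} and the inductive hypothesis $\alpha_j\le\tau_j$ to force $\lambda_1=0$ and $\lambda_j\le\tau_j$ in the relation $(\tau_i+1)g_i=\sum_{j<i}\lambda_jg_j$, and then derives the contradiction $(\tau_i+1)g_i\ge 2g_i>(\tau_{i-1}+1)g_{i-1}+\cdots+(\tau_2+1)g_2>\sum_{j<i}\lambda_jg_j$ from the growth condition.

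The second genuine gap is strictness. The theorem asserts that every implication is strict, and roughly half of the paper's proof consists of explicit witnesses: $\langle 6,10,15\rangle$ has a unique Betti element but is not a plane branch; $\langle 4,6,13\rangle$ is a plane branch without a unique Betti element; $\langle 8,10,15\rangle$ is telescopic but not $\alpha$-rectangular; $\langle 4,5,6\rangle$ is $\alpha$-rectangular but not telescopic; $\langle 8,10,11,12\rangle$ is $\gamma$- but not $\beta$-rectangular; $\langle 5,6,9\rangle$ is free but not $\gamma$-rectangular (its multiplicity $5$ is prime, so no product criterion can hold); and $\langle p_1p_3,p_2p_3,p_1p_4,p_2p_4\rangle$ with $p_3,p_4>p_1p_2$ is a complete intersection that is not free. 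You describe the kinds of examples needed but produce none, so this half of the statement remains unproved in your write-up.
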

\begin{proof}
In each of the proofs below, let $S$ be
minimally generated by $g_1 <
\cdots < g_{\nu}$.
\\
$\bullet$
Plane branch $\Rightarrow $ Telescopic.
\\
 It follows from Definitions \ref{defclasses}.
The semigroup $S= \langle 6,10,15 \rangle$ is not associated to a plane branch,
as $(\tau_2+1)g_2=3\cdot 10 >15 = g_3$;
however $S$ has a unique Betti element, in particular it is telescopic and with $\alpha$-rectangular Ap\'ery set (see below).
\\
$\bullet$
Plane branch $\Rightarrow \alpha$-rectangular Ap\'ery set.
\\
We prove  that $(\tau_i+1)g_i\notin \ap(S)$ by induction on $i\in\{2,\ldots,\nu\}$.
Since $(\tau_2+1)g_2\in \langle g_1 \rangle$ we get $(\tau_2+1)g_2 \notin \ap(S)$.
Given $i>2$, we have $(\tau_i+1)g_i=\lambda_1g_1+\cdots + \lambda_{i-1} g_{i-1}$ for some $\lambda_j \in \mathbb{N}$.
Assume by contradiction $(\tau_i+1)g_i\in \ap(S)$,
then by induction and Lemma \ref{easylemma} we must have $\lambda_1=0$ and $\lambda_j\leq \tau_j$ for $j=2,\ldots,i-1$.
By definition  of semigroup associated to a plane branch, we have the following chain of inequalities:
\begin{eqnarray*}
(\tau_i+1)g_i&\geq& 2g_i > 2(\tau_{i-1}+1) g_{i-1}\geq(\tau_{i-1}+1) g_{i-1}+2g_{i-1}>\\
&>& (\tau_{i-1}+1) g_{i-1}+2(\tau_{i-2}+1)g_{i-2}\geq \cdots \geq\\
&\geq& (\tau_{i-1}+1)g_{i-1}+\cdots + (\tau_{2}+1) g_{2} >\lambda_1g_1+\cdots + \lambda_{i-1} g_{i-1} = (\tau_i+1)g_i
\end{eqnarray*}
reaching a contradiction.
Hence  $(\tau_i+1)g_i\notin \ap(S)$ and $\alpha_i \leq \tau_i$.
Finally $\ap(S)$ is $\alpha$-rectangular by Proposition \ref{charalpha} $(v)$ as
$
m \leq \prod_{i=2}^{\nu} (\alpha_i+1) \leq \prod_{i=2}^{\nu} (\tau_i+1) =m,
$
where we used Remark \ref{incl} and the fact that $S$ is telescopic.
\\
$\bullet$
Unique Betti element $\Rightarrow \alpha$-rectangular Ap\'ery set.
\\
Let $a_1>a_2>\cdots >a_\nu>1$ be pairwise coprime integers such
that $g_i = \prod_{j\ne i} a_j$. Similarly to the previous proof,
it suffices to show that $\alpha_i+1\leq a_i$. But this is trivial
as $a_ig_i=a_1g_1 \notin \ap(S)$. Now let $S=\langle
4,6,13\rangle$: we have $\ap(S)=\{0,6,13,19\}$, $\tau_2=\tau_3=1$,
and $m=(\tau_2+1)(\tau_3+1)$, $(\tau_2+1)g_2 < g_3$. So $S$ is
associated to a plane branch and hence telescopic and with
$\alpha$-rectangular Ap\'ery set, but $S$  does not have a unique
Betti element.
\\
$\bullet$
Unique Betti element $\Rightarrow$ Telescopic.
\\
Let $a_1>a_2>\cdots >a_\nu>1$ be pairwise coprime integers such that $g_i =
\prod_{j\ne i} a_j$. We show that $\tau_i = a_i-1$ for each $i
\geq 2$, from which it follows that $S$ is telescopic by
Proposition \ref{chartel} $(iii)$. Since the $a_j$'s are coprime,
$a_i$ does not divide $h g_i$ for $h\leq a_i-1$, hence $hg_i
\notin \langle g_1, \ldots, g_{i-1}\rangle$. However $a_i g_i
=a_1g_1 \in \langle g_1, \ldots, g_{i-1}\rangle$ so that
$\tau_i=a_i-1$.
\\
$\bullet$ $\alpha$-rectangular Ap\'ery set $\Rightarrow$
$\beta$-rectangular Ap\'ery set $ \Rightarrow $
$\gamma$-rectangular Ap\'ery set.
\\
It follows from Remark \ref{incl}.
The semigroup  $S=\langle 8,10,15\rangle$ is telescopic and therefore $\ap(S)$ is $\beta$-rectangular (see below),
but it is not $\alpha$-rectangular:
 $\ap(S)=\{ 0,  10, 15, 20, 25, 30, 35, 45 \}$ and it is easy to check that
$\alpha_2=\alpha_3=3$ and $\tau_2=3, \tau_3=1$ so that
$m=(\tau_2+1)(\tau_3+1)$ but $m\ne(\alpha_2+1)(\alpha_3+1)$.
The  Ap\'ery set
of $S=\langle 8,10,11,12\rangle$ is $\gamma$-rectangular but not $\beta$-rectangular:
we have $\ap(S)=\{ 0,  10, 11, 12, 21, 22, 23, 33 \}$ and we get $\beta_2=1, \beta_3=3, \beta_4=1, \gamma_2= \gamma_3= \gamma_4=1$,
hence
$m=(\gamma_2+1)(\gamma_3+1)(\gamma_4+1)$ and $m\ne(\beta_2+1)(\beta_3+1)(\beta_4+1)$.
\\
$\bullet$ Telescopic $ \Rightarrow $ $\beta$-rectangular Ap\'ery set.
\\
For each $i\in\{2,\ldots,\nu\}$ we have $(\tau_i+1)g_i =
\lambda_1g_1+\cdots + \lambda_{i-1} g_{i-1}$ for some $\lambda_j
\in \mathbb{N}$. The fact that $g_1<\cdots<g_{i-1}<g_i$ forces
$\lambda_1+\cdots + \lambda_{i-1}>\tau_i+1$ and therefore
$\ord((\tau_i+1)g_i)>\tau_i+1$. It follows that $\beta_i \leq
\tau_i$ and by Remark \ref{incl} we get $ m\leq \prod_{i=2}^\nu
(\beta_i+1)\leq  \prod_{i=2}^\nu (\tau_i+1) =m $ and hence
$\ap(S)$ is $\beta$-rectangular by Proposition \ref{charactbeta}
$(v)$. Let $S= \langle 4, 5, 6 \rangle$: we have
$\ap(S)=\{0,5,6,11\}$ and thus $\alpha_2=\alpha_3=1$, $\tau_2=3,
\tau_3=1$ so that $\ap(S)$ is $\alpha$-rectangular as
$m=(\alpha_2+1)(\alpha_3+1)$ (hence $\beta$-rectangular) but $S$
is not telescopic as $m\ne(\tau_2+1)(\tau_3+1)$.
\\
$\bullet$ $\gamma$-rectangular Ap\'ery set $\Rightarrow$ Free.
\\
Assume  $S$ has $\gamma$-rectangular Ap\'ery set. Let $\sigma $ be
the permutation of $\{1,\ldots,\nu\}$ as in Lemma
\ref{lemmaSigma}, and consider the rearrangement of the minimal
generators ${\bf n}= \{ n_1, \ldots, n_\nu\}$ with
$n_i=g_{\sigma(i)}$. By relations (\ref{eq1})  for each
$i=2,\ldots,\nu$  we get
$$(\gamma_{\sigma(i)}+1)n_i=(\gamma_{\sigma(i)}+1)g_{\sigma(i)}=\sum_{j=1}^{\nu}\lambda_{\sigma(i),j}g_j=\sum_{j=1}^{\nu}\lambda_{\sigma(i),\sigma(j)}g_{\sigma(j)}=\sum_{j=1}^{\nu}\lambda_{\sigma(i),\sigma(j)}n_j$$
thus $\phi_i\le\gamma_{\sigma(i)}$ by the triangularity of the matrix ${\bf L}_{\sigma}$.
Following the notation  of \cite{RG}, let
$$\overline{c}_i=\min\big\{h\in\mathbb{N}\setminus\{0\}\ \big|\ \gcd(n_1,\dots,n_{i-1}) \mbox{ divides } hn_i\big\}.$$
In \cite[Lemma 9.13]{RG} it is proved that $n_1=\prod_{i=2}^{\nu}\overline{c}_i$ and $\overline{c}_i\le \phi_i+1$.
On the other hand $n_1=\prod_{i=2}^{\nu}(\gamma_{i}+1)=\prod_{i=2}^{\nu}(\gamma_{\sigma(i)}+1)$ by Proposition \ref{charact} $(iii)$.
We conclude that
$$
n_1=\prod_{i=2}^{\nu}\overline{c}_i\le \prod_{i=2}^{\nu}(\phi_i+1)\leq \prod_{i=2}^{\nu}(\gamma_{\sigma(i)}+1)=n_1
$$
hence $n_1=\prod_{i=2}^{\nu}(\phi_{i}+1)$ and  $S$ is free by Proposition \ref{charfree} $(iii)$.

Let $S=\langle 5,6,9 \rangle$.
Since $5$ is prime,
we cannot have $m=(\gamma_2+1)(\gamma_3+1)$,
therefore $\ap(S) $ is not $\gamma$-rectangular.
Consider the arrangement
${\bf n}=\{6,9,5\}$:
we have
$\phi_2=1,\phi_3=2$ so that $S$ is free as $n_1=(\phi_2+1)(\phi_3+1)$.
\\
$\bullet$ Free $\Rightarrow$ Complete intersection.
\\
This is well-known and is proven e.g. in \cite[Corollary 9.17]{RG} by means of gluing.
Counterexamples for the inverse implication are provided at the beginning of the next section.
\end{proof}

\section{Gluing and other applications}

In this section we explore
an operation that allows to construct new (more complicated) semigroups from old ones.
Let $S_1$ and $S_2$ be two numerical semigroups minimally generated by $n_1,\ldots,n_r$
and $m_{1},\ldots, m_s$, respectively.
Given positive integers $d_1 \in S_1 \setminus\{n_1,\ldots, n_r\}$
and $d_2 \in S_2 \setminus\{m_{1},\ldots, m_s\}$ such that
$\gcd(d_1,d_2)= 1$,
the semigroup
$$
S =d_2 S_1+d_1S_2= \langle d_2n_1,\ldots , d_2n_r, d_1 m_{1},\ldots , d_1 m_s\rangle
$$
is called a {\bf gluing} of $S_1$ and $S_2$.
Notice that $\nu(S)=\nu(S_1)+\nu(S_2)$.
The importance of gluing was first highlighted in \cite{De},
where the author proved that a semigroup is a complete intersection
if and only if it is a gluing of two complete intersection semigroups,
formulating thus a recursive characterization.
A gluing of two  symmetric semigroups is again symmetric.
Although the gluing of two free semigroups needs not be free,
a semigroup of embedding dimension $\nu$ is free if and only if it is a gluing of $\mathbb{N}$
and a free semigroup of embedding dimension $\nu-1$ (cf. \cite[Theorem 9.16]{RG}).
We remark that gluing has other interesting applications,
e.g. to Rossi's conjecture (cf. \cite{AM}, \cite{JZ}) and to Huneke-Wiegand conjecture (cf. \cite{GL}).

\begin{ex}
As an illustration,
we construct a family of complete intersection semigroups that are not free.
Let $p_1, p_2,p_3,p_4$ be distinct primes such that $p_3, p_4 > p_1p_2$.
Consider
$$ S=\langle p_1p_3,p_2p_3,p_1p_4,p_2p_4 \rangle = d_2T+d_1T$$
where $T=\langle p_1,p_2\rangle$, $d_1=p_4$ and $d_2=p_3$ (note
$p_3,p_4\in T\setminus\{p_1,p_2\}$ as $f(T)=p_1p_2-p_1-p_2$). Now
$T$ is a complete intersection being two-generated, therefore $S$
is a complete intersection. However, there is no hope of
expressing $S$ as a gluing of $\mathbb{N}$ and a three-generated
semigroup because any three generators of $S$ are coprime; by the
characterization above $S$ is not free.
\end{ex}

\begin{rem}
By \cite[Theorem 9.16]{RG} and by definition, it is easy to see
that a semigroup $S$ is telescopic if and only if it is a gluing
of $\mathbb{N}$ and a telescopic semigroup $T=\langle n_1,\dots ,
n_{\nu-1}\rangle$ with  $d_2 > d_1n_{\nu-1}$.

Furthermore, it is also easy to check that a semigroup $S=\langle
g_1, \ldots, g_\nu\rangle $ has a unique Betti element if and only
if it is the gluing  $d_1 T + d_2\mathbb{N}$ where $T=\langle n_1,
\ldots, n_{\nu-1}\rangle$ has a unique Betti element,
 $d_2= \mathrm{lcm}(n_1, \ldots, n_{\nu-1})$ and $\gcd(n_i, d_1)=1$ for each $i$.

Finally, by definition, a semigroup is associated to a plane
branch if and only if it is a gluing of $\mathbb{N}$ and a
semigroup associated to a plane branch $T=\langle n_1,\dots ,
n_{\nu-1}\rangle$ with  $d_2
> d_1(\tau_{\nu-1}(T)+1)n_{\nu-1}$.
\end{rem}

Our aim at this point is to push this study further:
we use gluing to prove a recursive characterization for  semigroups with $\alpha$-rectangular Ap\'ery sets.

\begin{thm}\label{TheoremGlueAlpha}
Let $T$ be a  semigroup with $\alpha$-rectangular Ap\'ery set and
$d_1, d_2 \in \mathbb{N}$ such that $d_1 \notin \ap(T)$, $d_1> d_2m(T)$;
then the gluing $S=d_2T+d_1\mathbb{N}$  has $\alpha$-rectangular Ap\'ery set.
Conversely, every  semigroup $S\ne \mathbb{N}$ with $\alpha$-rectangular Ap\'ery set arises in this way.
\end{thm}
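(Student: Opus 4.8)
The plan is to handle the two directions separately: the forward implication rests on an explicit description of $\ap(S)$, and the converse on peeling off the generator singled out by Lemma~\ref{lemmaSigma}. For the forward direction write $T=\langle n_1<\cdots<n_r\rangle$, so that $S=\langle d_2n_1,\dots,d_2n_r,d_1\rangle$; since $d_1>d_2m(T)=d_2n_1$ the multiplicity is $m(S)=d_2n_1$. The key step is the Apéry formula
\[
\ap(S,m(S))=\{\,d_2w+d_1k : w\in\ap(T,n_1),\ 0\le k\le d_2-1\,\}.
\]
These $|\ap(T,n_1)|\cdot d_2=n_1d_2=m(S)$ elements lie in pairwise distinct classes modulo $m(S)$: reducing a congruence $d_2w+d_1k\equiv d_2w'+d_1k'\pmod{d_2n_1}$ first modulo $d_2$ and invoking $\gcd(d_1,d_2)=1$ forces $k=k'$, and then $w\equiv w'\pmod{n_1}$ forces $w=w'$. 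Hence it suffices to show each element lies in $\ap(S)$, i.e. that $d_2(w-n_1)+d_1k\notin S$. Writing a hypothetical $d_2(w-n_1)+d_1k=d_2t'+d_1k'$ with $t'\in T,\ k'\ge0$ and comparing the representations $d_2w+d_1k=d_2(n_1+t')+d_1k'$, coprimality yields $w=(n_1+t')+d_1\ell$ and $k'=k+d_2\ell$ for some $\ell\in\mathbb{Z}$; the case $\ell\le-1$ contradicts $k'\ge0$ (as $0\le k\le d_2-1$), while $\ell\ge0$ gives $w-n_1=t'+d_1\ell\in T$ (using $d_1\in T$), contradicting $w\in\ap(T,n_1)$.

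Substituting the $\alpha$-rectangular form of $\ap(T,n_1)$ exhibits $\ap(S)$ as the box $\{\sum_{i=2}^r\lambda_i(d_2n_i)+k\,d_1 : 0\le\lambda_i\le\alpha_i(T),\ 0\le k\le d_2-1\}$, and it remains to identify the bounds as the $\alpha$-invariants of $S$. Using $d_1\notin\ap(T)$, that is $d_1-m(T)\in T$, one gets $d_2d_1-m(S)=d_2(d_1-n_1)\in d_2T\subseteq S$, so $d_2d_1\notin\ap(S)$ and $\alpha_{d_1}(S)=d_2-1$; likewise $(\alpha_i(T)+1)d_2n_i\notin\ap(S)$ gives $\alpha_{d_2n_i}(S)=\alpha_i(T)$. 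Then $\prod_j(\alpha_j(S)+1)=d_2\prod_{i=2}^r(\alpha_i(T)+1)=d_2\,m(T)=m(S)$, and Proposition~\ref{charalpha}$(v)$ finishes the forward implication.

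For the converse let $S\ne\mathbb{N}$ be $\alpha$-rectangular with generators $g_1<\cdots<g_\nu$. The chain $m=\prod(\gamma_i+1)\le\prod(\beta_i+1)\le\prod(\alpha_i+1)=m$ from Remark~\ref{incl} collapses, so $\gamma_i=\beta_i=\alpha_i$ for every $i$; in particular $(\alpha_i+1)g_i=(\gamma_i+1)g_i\notin\ap(S)$, so case~(I) in Lemma~\ref{lemmaSigma} never occurs and every relation~\eqref{eq1} satisfies $\lambda_{i,1}>0$. Let $\sigma$ be the permutation of that lemma, set $n_i=g_{\sigma(i)}$, and peel off the last generator: put $d_1=n_\nu$ and $d_2=\gcd(n_1,\dots,n_{\nu-1})$, so $\gcd(d_1,d_2)=\gcd(n_1,\dots,n_\nu)=1$. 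The crucial point is $d_2=\alpha_{\sigma(\nu)}+1$: the triangular relation~\eqref{eq1} for $i=\nu$ reads $(\alpha_{\sigma(\nu)}+1)n_\nu\in\langle n_1,\dots,n_{\nu-1}\rangle$, whence $d_2\mid\alpha_{\sigma(\nu)}+1$, while the free structure established in the proof of $(6)\Rightarrow(7)$ gives $\gcd(n_1,\dots,n_{\nu-1})=\overline{c}_\nu=\phi_\nu+1=\gamma_{\sigma(\nu)}+1=\alpha_{\sigma(\nu)}+1$. Thus $d_2\ge2$ and $d_2\mid n_i$ for $i<\nu$, so $T=\langle n_1/d_2,\dots,n_{\nu-1}/d_2\rangle$ is a numerical semigroup with minimal generators $n_i/d_2$ (minimality descending from $S$) and $m(T)=n_1/d_2$.

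It is then routine to verify that $S=d_2T+d_1\mathbb{N}$ is a genuine gluing: the relation above gives $d_2n_\nu\in\langle n_1,\dots,n_{\nu-1}\rangle$, so $d_1=n_\nu\in T$, and since $\lambda_{\sigma(\nu),1}>0$ we get $d_2n_\nu-n_1\in\langle n_1,\dots,n_{\nu-1}\rangle=d_2T$, i.e. $d_1-m(T)\in T$, hence $d_1\notin\ap(T)$ (which also shows $d_1$ is not a minimal generator of $T$, as all of those lie in $\ap(T)$); moreover $d_1=n_\nu>n_1=d_2m(T)$. Finally $T$ is $\alpha$-rectangular: the subbox $B=\{\sum_{i=2}^{\nu-1}\lambda_in_i : 0\le\lambda_i\le\alpha_{\sigma(i)}\}\subseteq\ap(S,n_1)$ is divisible by $d_2$, and $B/d_2$ consists of $\prod_{i=2}^{\nu-1}(\alpha_{\sigma(i)}+1)=n_1/d_2=m(T)$ elements, pairwise incongruent modulo $m(T)$, each in $\ap(T,m(T))$ because $x\in B\subseteq\ap(S)$ forces $x-n_1\notin S\supseteq d_2T$; thus $\ap(T,m(T))=B/d_2$ is the $\alpha$-rectangular box in the generators $n_i/d_2$, and Proposition~\ref{charalpha}$(v)$ applies. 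The main obstacle is precisely the identity $d_2=\gcd(n_1,\dots,n_{\nu-1})=\alpha_{\sigma(\nu)}+1$: the easy divisibility $d_2\mid\alpha_{\sigma(\nu)}+1$ must be matched by the reverse inequality, and it is here that the collapse $\gamma_i=\alpha_i$ and the consequent non-occurrence of case~(I) in Lemma~\ref{lemmaSigma} are indispensable, while all remaining Apéry-set computations and cardinality counts are bookkeeping once this value is pinned down.
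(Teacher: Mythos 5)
Your proof is correct, and although it shares the paper's skeleton --- both directions ultimately invoke the product criterion of Proposition \ref{charalpha}~(v), and the converse uses exactly the same decomposition $d_2=\gcd(n_1,\dots,n_{\nu-1})$, $d_1=n_\nu$, $T=\langle n_1/d_2,\dots,n_{\nu-1}/d_2\rangle$ built from the rearrangement of Lemma \ref{lemmaSigma} --- the key steps are executed quite differently. In the forward direction you first establish the explicit description $\ap(S)=d_2\ap(T)+\{0,d_1,\dots,(d_2-1)d_1\}$ and read the box structure off it; the paper never computes $\ap(S)$, proving only the upper bounds $\alpha_i(S)\le\alpha_i(T)$ and $\alpha_\nu(S)\le d_2-1$ and letting the counting inequality of Remark \ref{incl} close the gap. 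Your version is longer but exhibits the Apéry set explicitly. In the converse the paper obtains $d_2=\alpha_{\sigma(\nu)}(S)+1$ from unique expression of $\ap(S)$ together with the identity $d_2=\min\{h\,|\,hg_l\in\langle g_1,\dots,\widehat{g_l},\dots,g_\nu\rangle\}$ from \cite{RG}, proves $\alpha_i(T)\le\alpha_{\sigma(i)}(S)$ by a contradiction argument that substitutes a representation of $g_l$, and derives $d_1\notin\ap(T)$ by yet another contradiction; you instead exploit the collapse $\gamma_i=\beta_i=\alpha_i$ (so only case (II) of Lemma \ref{lemmaSigma} occurs and every relation has $\lambda_{i,1}>0$), read $d_2=\overline{c}_\nu=\gamma_{\sigma(\nu)}+1$ off the squeezed product chain from the proof of $(6)\Rightarrow(7)$, get $d_1-m(T)\in T$ directly from $\lambda_{\sigma(\nu),1}>0$, and identify $\ap(T)$ outright as the scaled sub-box $B/d_2$ by a counting argument. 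Your route eliminates two proofs by contradiction and makes the role of Lemma \ref{lemmaSigma} more transparent, at the cost of relying on details internal to the proof of Theorem \ref{main} rather than only its statement. The two places where you are terse --- that distinct tuples in $B$ give distinct elements (this follows from $|\ap(S)|=\prod_{i=2}^{\nu}(\alpha_i+1)$), and that $\ap(T)=B/d_2$ forces $\alpha_i(T)=\alpha_{\sigma(i)}(S)$ so that Proposition \ref{charalpha}~(v) genuinely applies --- are indeed routine to fill in.
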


\begin{proof}
Assume that $S$ is the gluing $d_2T+d_1\mathbb{N}$  where  $T=\langle n_1<\cdots<n_{\nu-1}\rangle$ has $\alpha$-rectangular Ap\'ery set and  $d_1 \in T\setminus\{n_1, \ldots, n_{\nu-1}\},d_2\in \mathbb{N}\setminus\{1\} $ are coprime integers
satisfying
 $d_1 \notin \ap(T)$ and $d_1> d_2m(T)$;
 in particular we have
$m(S)=d_2m(T)$.
In the proof of this implication
 $\alpha_i(S)$ denotes, with an abuse of notation, the integer $\alpha$ from Defintions \ref{definIntegers} relative to the  minimal generator $d_2n_i$ of $S$
(which is not necessarily the $i$-th generator of $S$ in increasing order).
By Proposition \ref{charalpha} $(v)$, $n_1 = \prod_{i=2}^{\nu-1}(\alpha_i(T)+1)$.
We claim that $\alpha_i(S)\leq \alpha_i(T)$ for each  $i=2,\ldots, \nu-1$.
In fact
$$
(\alpha_i(T)+1)n_i=\lambda_1n_1+\cdots+ \lambda_{\nu-1} n_{\nu-1}
\, \Longrightarrow \,
(\alpha_i(T)+1)d_2n_i=\lambda_1 d_2n_1+\cdots+ \lambda_{\nu-1}d_2  n_{\nu-1}
$$
for some $\lambda_j \in \mathbb{N}$ with $\lambda_1>0$.
Since $m(S)=d_2n_1$ we get
$(\alpha_i(T)+1)d_2n_i\notin \ap(S)$,
proving that $\alpha_i(S)\leq \alpha_i(T)$.
Now we show that $ \alpha_\nu(S) \leq d_2-1$:
we have
$d_1-n_1\in T$ as $d_1\notin \ap(T)$,
therefore
$d_2d_1 - d_2 n_1 \in S$
and $d_2 d_1 \notin \ap(S)$.
By Remark \ref{incl}
$$
m(S) \leq \prod_{i=2}^\nu (\alpha_i(S)+1) \leq d_2 \prod_{i=2}^{\nu-1} (\alpha_i(T)+1)=d_2n_1=m(S)
$$
and hence $\ap(S)$ is $\alpha$-rectangular by $m(S)= \prod_{i=2}^\nu (\alpha_i(S)+1)$.
\\

Assume now that $S=\langle g_1<\cdots<g_\nu\rangle \ne \mathbb{N}$ has $\alpha$-rectangular Ap\'ery set.
By Theorem \ref{main} $\ap(S)$ is $\gamma$-rectangular and thus
there is a rearrangement ${\bf n} = \{ n_1, \ldots, n_\nu\}$  of the minimal
generators such that $g_1=n_1$ and fulfilling the conditions of Proposition \ref{charfree};
let $\sigma$ be the permutation such that $n_i=g_{\sigma(i)}$.
Let $d=\gcd(n_1,\ldots, n_{\nu-1})$.
Then $S$ is the gluing of
$T=\Big\langle \frac{n_1}{d},\ldots, \frac{n_{\nu-1}}{d}\Big\rangle$ and $\mathbb{N}$,
with integers $d_1=n_\nu$ and $d_2=d$;
furthermore $T$ is free by  \cite[Theorem 9.16]{RG}.
We prove that $\ap(T)$ is $\alpha$-rectangular.

Let $l= \sigma(\nu)$; it is shown in \cite[Lemma  9.13 (3), Proposition 9.15 (4)]{RG} that
$$
d= \min \big\{ h \in \mathbb{N} \, \big| \, hg_l \in \langle g_1, \ldots, \widehat{g_l}, \ldots, g_\nu\rangle \big\}.
$$
By  unique expression of $\ap(S)$ we get $h g_l \notin \langle g_1, \ldots, \widehat{g_l}, \ldots, g_\nu\rangle$ for all $h \leq \alpha_l(S)$, so $\alpha_l(S) \leq d-1$.
On the other hand
$(\alpha_l(S)+1)g_l \notin \ap(S)$,
so it has another representation involving the multiplicity $g_1$,
and by maximality of $\alpha_l(S)$ this representation does not involve $g_l$.
Thus $(\alpha_l(S)+1) g_l \in \langle g_1, \ldots,  \widehat{g_l}, \ldots, g_\nu\rangle $ and $\alpha_l(S)+1 \geq d$.
Hence $d=\alpha_l(S)+1$.

Let us show now that $\alpha_{i}(T)\leq \alpha_{\sigma(i)}(S)$ for each $i=2,\ldots, \nu$
 (here $\alpha_i(T)$ denotes the integer $\alpha$ relative to the minimal generator $\frac{n_i}{d}$ of $T$).
If $\alpha_{i}(T)> \alpha_{\sigma(i)}(S)$,
then
$$(\alpha_{\sigma(i)}(S)+1) \frac{g_{\sigma(i)}}{d} \in \ap(T)
\, \Longrightarrow \,
(\alpha_{\sigma(i)}(S)+1) \frac{g_{\sigma(i)}}{d} - \frac{g_1}{d} \notin T
$$
because $m(T)=\frac{g_1}{d}$.
By definition of $\alpha_{\sigma(i)}$,
we have
$$
(\alpha_{\sigma(i)}(S)+1) g_{\sigma(i)} - g_1 \in S \, \Longrightarrow \, (\alpha_{\sigma(i)}(S)+1) g_{\sigma(i)} = g_1 + \sum_{j\ne l} \xi_j g_j + \xi_l g_l
$$
hence $d$ divides $\xi_l g_l$,
but $\gcd(d,g_l)=\gcd(S)=1$, therefore $d$ actually divides $\xi_l$.
It follows
\begin{equation*}\label{eq2}\tag{$ \dagger $}
(\alpha_{\sigma(i)}(S)+1) \frac{g_{\sigma(i)}}{d} = \frac{g_1}{d} + \sum_{j\ne l} \xi_j \frac{g_j}{d} + \frac{\xi_l}{d} g_l.
\end{equation*}
By definition of gluing $g_l=d_1\in T\setminus\big\{\frac{n_1}{d},\ldots, \frac{n_{\nu-1}}{d}\big\}$,
i.e. $g_l = \sum_{j\ne l} \eta_j \frac{g_j}{d}$.
Substituting this last equation in (\ref{eq2}) we obtain the contradiction
$$
(\alpha_{\sigma(i)}(S)+1) \frac{g_{\sigma(i)}}{d} - \frac{g_1}{d}
=
\sum_{j\ne l} \Big( \xi_j + \frac{\eta_j \xi_l}{d}\Big)\frac{g_j}{d}
\in T.
$$

Putting all the inequalities together, we get by Remark \ref{incl} and $\alpha$-rectangularity of $\ap(S)$
$$
m(T)\leq \prod_{i=2}^{\nu-1} \Big( \alpha_i(T)+1\Big) \leq \prod_{i\ne l } \Big( \alpha_{\sigma(i)}(S)+1\Big)= \frac{ \prod_{i=2  }^\nu \Big( \alpha_{i}(S)+1\Big)}{\alpha_l(S)+1}= \frac{m(S)}{d}=m(T)
$$
concluding that $\ap(T)$ is $\alpha$-rectangular by Proposition \ref{charalpha} $(v)$.

Now if $d_1=n_\nu \in \ap(T)$, then $n_\nu= \sum_{i=2}^{\nu-1}
\lambda_i \frac{n_i}{d}$ with $\lambda_i \leq \alpha_i(T)$ and
hence $n_\nu= \sum_{i=2}^{\nu-1} \lambda_{i}
\frac{g_{\sigma(i)}}{d}$ with $\lambda_{i} \leq
\alpha_{i}(T)\leq\alpha_{\sigma(i)}(S)$, by the previous part of
the proof. Since $\ap(S)$ is $\alpha$-rectangular, it follows that
$dn_\nu\in \ap(S)$, contradicting, { again  by the previous
part of the proof, the  definition of $\alpha_l$}.

The fact that $d_1=n_\nu>d_2 m(T)$ follows from $d_2 m(T)=n_1=g_1<g_l=n_\nu$.
\end{proof}

\begin{ex}
Let $T= \langle 18 , 21 , 27 , 35 \rangle$, then it is possible to check that $\alpha_2=2,\alpha_3=1, \alpha_4=2$ from which it follows that
$\ap(T)$ is $\alpha$-rectangular.
Let $S=2T+69\mathbb{N}= \langle 36, 42, 54, 69, 70\rangle$;
then we have $\alpha_2=2,\alpha_3=1, \alpha_4=3, \alpha_5=2$ so that 
$\ap(S)$ is not $\alpha$-rectangular.
This example shows the property of having $\alpha$-rectangular Ap\'ery set is not preserved by gluing with $\mathbb{N}$ if we drop the hypothesis $d_1 \notin \ap(T)$ in Theorem \ref{TheoremGlueAlpha} (notice that $69 \in \ap(T)$).
\end{ex}

\begin{que}
Is it possible
to characterize semigroups with $\beta$-rectangular and $\gamma$-rectangular Ap\'ery set in terms of gluing?
\end{que}

We conclude the paper by relating our work to a theorem of Watanabe and one of Rosales and Branco.
In \cite[Theorem 1]{Wa} the author proves that there exist complete intersection semigroups $S$ with prescribed values
of multiplicty and embedding dimension,
satisfying the condition $m(S) \geq 2^{\nu(S)-1}$.
We want to apply Theorem \ref{TheoremGlueAlpha} to prove a similar statement for semigroups with $\alpha$-rectangular Ap\'ery set.
However
we need the stronger condition $\ell(m(S))\geq \nu(S) - 1$,
 where $\ell(\cdot)$ denotes the length of the factorization into primes of an integer.
Note that this condition  is implied if $\ap(S)$ is  $\alpha$-rectangular,
as it follows from Propostion \ref{charalpha} $(v)$.

\begin{cor}
Given $m,\nu \in \mathbb{N}$ with $\ell(m)\geq \nu-1, \nu\geq 2$,
there exists a semigroup $S$ with $m(S)=m$, $\nu(S)=\nu$ such that $\ap(S)$ is $\alpha$-rectangular.
\end{cor}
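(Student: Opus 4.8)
The plan is to induct on the embedding dimension $\nu$, constructing $S$ by repeatedly gluing with $\mathbb{N}$ and invoking Theorem \ref{TheoremGlueAlpha} to propagate $\alpha$-rectangularity. The guiding numerical fact is Proposition \ref{charalpha} $(v)$: an $\alpha$-rectangular semigroup of embedding dimension $\nu$ satisfies $m=\prod_{i=2}^{\nu}(\alpha_i+1)$, and moreover each factor is at least $2$, since for every minimal generator $g_i$ with $i\geq 2$ we have $g_i\in\ap(S)$ (otherwise $g_i-m\in S$ would yield a non-minimal representation of $g_i$), forcing $\alpha_i\geq 1$. Thus the hypothesis $\ell(m)\geq\nu-1$ is precisely what is needed, and the construction will realize $m$ as a product of $\nu-1$ factors, each exceeding $1$.

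For the base case $\nu=2$ the hypothesis reads $\ell(m)\geq 1$, i.e. $m\geq 2$. I would take any integer $b>m$ with $\gcd(m,b)=1$ (for instance $b=m+1$) and set $S=\langle m,b\rangle$. This is minimally two-generated with $m(S)=m$, and its Ap\'ery set with respect to $m$ is $\{0,b,2b,\ldots,(m-1)b\}$, so $\alpha_2=m-1$ and $m=\alpha_2+1=\prod_{i=2}^{2}(\alpha_i+1)$. Hence $\ap(S)$ is $\alpha$-rectangular by Proposition \ref{charalpha} $(v)$.

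For the inductive step, assume $\nu\geq 3$ and $\ell(m)\geq\nu-1$. I would choose a prime divisor $p$ of $m$ and write $m=pm'$, so that $\ell(m')=\ell(m)-1\geq(\nu-1)-1$ and $m'\geq 2$. By the inductive hypothesis there is a semigroup $T$ with $m(T)=m'$, $\nu(T)=\nu-1$ and $\ap(T)$ $\alpha$-rectangular; note $T\neq\mathbb{N}$ because $\nu(T)\geq 2$. The goal is to realize $S$ as the gluing $d_2T+d_1\mathbb{N}$ with $d_2=p$ and a suitably chosen $d_1$, so that Theorem \ref{TheoremGlueAlpha} yields an $\alpha$-rectangular $\ap(S)$ with $m(S)=d_2\,m(T)=pm'=m$ and $\nu(S)=\nu(T)+1=\nu$.

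The only genuine work is exhibiting a legitimate $d_1$. It must lie in $T$ without being a minimal generator of $T$, be coprime to $d_2=p$, and satisfy the two hypotheses $d_1\notin\ap(T)$ and $d_1>d_2\,m(T)=m$ of Theorem \ref{TheoremGlueAlpha}. Since $T$ has finite complement in $\mathbb{N}$ it contains every sufficiently large integer, and among these the residue class $1+p\mathbb{N}$ supplies infinitely many elements coprime to $p$; taking $d_1$ to be such an element chosen large enough guarantees $d_1>m$, that $d_1\notin\ap(T)$ (as $\ap(T)$ is finite), and that $d_1$ is not a minimal generator (of which there are finitely many). With this choice all the gluing hypotheses hold, and the induction closes. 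The main obstacle is thus precisely this existence check, which the infinitude of large elements of $T$ coprime to $p$ resolves.
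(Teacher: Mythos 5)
Your proposal is correct and follows essentially the same route as the paper: both arguments factor $m$ into $\nu-1$ factors each at least $2$ (you peel off one prime at a time, the paper fixes the factorization up front), start from a two-generated semigroup $\langle m', b\rangle$, and repeatedly glue with $\mathbb{N}$ using Theorem \ref{TheoremGlueAlpha}, with $d_2$ equal to the next factor and $d_1$ a sufficiently large element of $T$ coprime to $d_2$. Your explicit verification that a legitimate $d_1$ exists merely spells out what the paper compresses into ``choosing $d_1$ sufficiently large.''
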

\begin{proof}
Since $\ell(m)\geq \nu-1$ we can write  $m=a_1a_2\cdots a_{\nu-1}$,
with $a_i\geq 2$  integers, not necessarily prime.
Let
$S^{(1)}= \langle a_1, b \rangle$ where $b>a_1$ and $\gcd(a_1,b)=1$,
then
 $\ap(S^{(1)})$ is $\alpha$-rectangular.
Assume we constructed $S^{(j-1)}$ with $j>1$ fulfilling
 $m(S^{(j-1)})=a_1\cdots a_{j-1}$,
$\nu(S^{(j-1)})=j$
and with   $\ap(S^{(j-1)})$ $\alpha$-rectangular.
Glue
$S^{(j-1)}$ and $\mathbb{N}$ with integers $d_1$ and $d_2=a_j$,
choosing $d_1$ sufficiently large.
By Theorem \ref{TheoremGlueAlpha} the result $S^{(j)}$ has still $\alpha$-rectangular Ap\'ery set,
and furthermore $m(S^{(j)})=a_1\cdots a_j$ and $\nu (S^{(j)})=j+1$.
Finally take $S=S^{(\nu-1)}$.
\end{proof}

Now we analyze a family of semigroups introduced in \cite{RB},
where the authors provide families of free semigroups with arbitrary embedding dimension.

\begin{prop}
Let $a,b,p\in \mathbb{N}$ be such that $\gcd(a,b)=1$ and $a,b,p>1$.
The semigroup $S=\langle a^p, a^p+b, a^p+ab,\ldots, a^p+a^{p-1}b\rangle$ has $\alpha$-rectangular Ap\'ery set and is not telescopic.
\end{prop}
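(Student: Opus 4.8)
First I would fix notation: write $g_1=a^p$ and $g_j=a^p+a^{j-2}b$ for $j=2,\dots,p+1$, so that (using $a,b>1$) $g_1<g_2<\cdots<g_{p+1}$ and every nonzero element of $S$ is at least $a^p$. Granting for a moment that these $p+1$ elements are the minimal generators, we have $\nu(S)=p+1$ and $m(S)=a^p$. The first real task is to establish this minimality. The element $g_1$ is the least nonzero element, hence a minimal generator; so suppose instead that some $g_k$ with $k\ge 2$ is redundant, say $g_k=\sum_{j\ne k}c_jg_j$ with $c_j\in\mathbb{N}$ and $N:=\sum_{j\ne k}c_j\ge 2$ (the case $N=1$ is impossible since the $g_j$ are distinct). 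Separating the $a^p$-part from the $b$-part and using $g_1=a^p$, this relation rearranges to
$$b\Big(a^{k-2}-\sum_{j\ge 2,\,j\ne k}c_ja^{j-2}\Big)=(N-1)a^p.$$
Because $\gcd(a,b)=1$, the integer $X:=a^{k-2}-\sum_{j\ge 2,\,j\ne k}c_ja^{j-2}$ is divisible by $a^p$; since $N\ge 2$ we have $X>0$, so $X\ge a^p$, while plainly $X\le a^{k-2}$. Hence $a^{k-2}\ge a^p$, forcing $k\ge p+2$, a contradiction. Thus $\nu(S)=p+1$ and $m(S)=a^p$.

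Next I would record the relations
$$a\,g_j=(a-1)g_1+g_{j+1}\ \ (2\le j\le p),\qquad a\,g_{p+1}=(a+b)g_1,$$
each a one-line check. They show $a\,g_j-g_1\in S$, so $a\,g_j\notin\ap(S)$, and then Lemma \ref{easylemma} rules out any higher multiple of $g_j$ from $\ap(S)$, giving $\alpha_j\le a-1$ for every $j=2,\dots,p+1$. Combining with Remark \ref{incl},
$$a^p=m(S)\le\prod_{j=2}^{p+1}(\alpha_j+1)\le\prod_{j=2}^{p+1}a=a^p,$$
so equality holds throughout and $\ap(S)$ is $\alpha$-rectangular by Proposition \ref{charalpha}$(v)$ (and in fact $\alpha_j=a-1$ for all $j$).

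For the non-telescopic assertion I would compute $\tau_2$. Since $g_2=a^p+b$ and $\gcd(a^p,b)=1$, the least $h$ with $hg_2\in\langle g_1\rangle=\langle a^p\rangle$ is $h=a^p$, so $\tau_2=a^p-1$ and already $\tau_2+1=m(S)$. As each $g_j$ is a minimal generator, $g_j\notin\langle g_1,\dots,g_{j-1}\rangle$ and hence $\tau_j\ge 1$; and because $p\ge 2$ there is at least one index $j\ge 3$. Therefore
$$\prod_{j=2}^{p+1}(\tau_j+1)\ge(\tau_2+1)\cdot 2=2a^p>a^p=m(S),$$
so $S$ is not telescopic by Proposition \ref{chartel}$(iii)$.

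I expect the minimality step to be the main obstacle. When $b\ge a$ the larger generators exceed $2a^p$, so one cannot dismiss redundancy by a size comparison alone; the divisibility argument above, which is exactly where $\gcd(a,b)=1$ enters, is what makes it work. Once $\nu(S)=p+1$ and $m(S)=a^p$ are in hand, both conclusions reduce to the short computations with the characterizations in Propositions \ref{charalpha} and \ref{chartel}.
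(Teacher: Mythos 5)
Your proof is correct and follows essentially the same route as the paper's: the relations $a\,g_j=(a-1)g_1+g_{j+1}$ and $a\,g_{p+1}=(a+b)g_1$ yield $\alpha_j\le a-1$, the squeeze $a^p=m(S)\le\prod_{j=2}^{p+1}(\alpha_j+1)\le a^p$ with Remark \ref{incl} and Proposition \ref{charalpha}$(v)$ gives $\alpha$-rectangularity, and $\tau_2+1=a^p$ together with $p>1$ rules out telescopicity via Proposition \ref{chartel}$(iii)$. The only difference is that you supply an explicit divisibility argument for the minimality of the generating set, which the paper asserts without proof; that step is a correct and welcome addition.
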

\begin{proof}
We have that $\{ a^p, a^p+b, a^p+ab,\ldots, a^p+a^{p-1}b \}$ 
are the minimal generators of $S$. 
Let us assume that $g_1=a^p, g_i = a^p + a^{i-2} b$ for all
$ i \in \{2, \ldots, p+1\}$.
If $i \leq p$,
we have $ag_i-g_1=a(a^p+a^{i-2}b)-a^p=(a-1)a^p+a^{i-1}b=g_{i+1}+(a-2)g_1\in S$.
If $i=p+1$
then
$ag_i-g_1=a(a^p+a^{p-1}b)-a^p=(a+b-1)g_1\in S$.
In both cases we have $ag_i\notin \ap(S)$ and hence $\alpha_i \leq a-1$.
Thus $g_1=a^p\geq \prod_{i=2}^{p+1}(\alpha_i+1)$.
But we have in general
 $g_1=a^p\leq \prod_{i=2}^{p+1}(\alpha_i+1)$ (cf. Remark \ref{incl})  and therefore $\ap(S)$ is $\alpha$-rectangular by Proposition \ref{charalpha} $(v)$.
  These semigroups are never telescopic:
 since $\gcd(a,b)=1$
we necessarily have $\tau_2+1=a^p$ and since $p>1$ it follows that $\prod_{i=2}^{p+1}(\tau_i+1)>\tau_2+1=a^p=m$.
\end{proof}

\end{document}